\newtheorem{theorem}{Theorem}[section]
\newtheorem{lemma}[theorem]{Lemma}
\newtheorem{remark}[theorem]{Remark}
\newtheorem{assumption}{Assumption}
\newtheorem{definition}{Definition}
\numberwithin{equation}{section}
\begin{document}

\title[An inverse random source problem]{An inverse random source problem for the biharmonic wave equation}

\author{Peijun Li}
\address{Department of Mathematics, Purdue University, West Lafayette, Indiana 47907, USA}
\email{lipeijun@math.purdue.edu}

\author{Xu Wang}
\address{Department of Mathematics, Purdue University, West Lafayette, Indiana 47907, USA}
\email{wang4191@purdue.edu}

\thanks{The research is supported in by part the NSF grant DMS-1912704.}

\subjclass[2010]{35R30, 35R60, 65M32}

\keywords{inverse random source problem, biharmonic operator, Gaussian random fields, stochastic differential equations, pseudo-differential operator, principal symbol}

\begin{abstract}
This paper is concerned with an inverse source problem for the stochastic biharmonic operator wave equation. The driven source is assumed to be a microlocally isotropic Gaussian random field with its covariance operator being a classical pseudo-differential operator. The well-posedness of the direct problem is examined in the distribution sense and the regularity of the solution is discussed for the given rough source. For the inverse problem, the strength of the random source, involved in the principal symbol of its covariance operator, is shown to be uniquely determined by a single realization of the magnitude of the wave field averaged over the frequency band with probability one. Numerical experiments are presented to illustrate the validity and effectiveness of the proposed method for the case that the random source is the white noise. 
\end{abstract}

\maketitle

\section{Introduction}

As one of the important research subjects in inverse scattering theory, inverse source problems for wave propagation have diverse scientific and industrial applications such as antenna design and synthesis, medical imaging \cite{I90}. They have continuously attracted much attention from many researchers. We refer to \cite{BLLT15} and the references cited therein for some recent advances on this topic. Meanwhile, the study on boundary value problems for higher-order elliptic operators has generated sustained interest in the mathematics community \cite{GGS10}. The biharmonic operator, which may arise from the modeling of elasticity for example, appears to be a natural candidate for such a study \cite{M3P09, RR20, S00}. Compared with inverse problems involving the second order differential operators, the inverse problems for the biharmonic operator are much less studied. The reason is not only the increase of the order which leads to the failure of the methods developed for the second order equations, but also the properties of the solutions for the higher order equations are more sophisticated. Some of the inverse boundary value problems for bi- and poly-harmonic operators can be found in \cite{GX19, KLG12, KLG14, LYZ, TH17, TS18, Y14}. 

In practice, there are many uncertainties caused by the unpredictability of the surrounding environment, incomplete knowledge of the studied system, fine-scale spatial or temporal variations, etc., which cannot be neglected during analysis or simulation. To take account of uncertainties, it would be reasonable and important to introduce random parameters to the mathematical modeling. Stochastic inverse problems refer to as inverse problems that involve randomness. Compared to their deterministic counterparts, stochastic inverse problems are more difficult due to two extra challenges: the random parameter is sometimes too rough to exist point-wisely and can only be interpreted as a distribution; the statistics such as the average and variance of the random parameter are required to be reconstructed. New methodology needs to be developed not only for the inverse problems but also for the corresponding direct problems in stochastic settings. 

In this work, we consider an inverse source problem for the stochastic biharmonic wave equation
\begin{equation}\label{eq:model}
\Delta^2u-k^4u=f\quad \text{in}~\mathbb R^d,
\end{equation}
where $d=2$ or $3$ and $k>0$ is the wavenumber.  The wave field $u$ and its Laplacian $\Delta u$ are required to satisfy the Sommerfeld radiation condition
\begin{equation}\label{eq:radiation}
\lim_{r\to\infty}r^{\frac{d-1}2}\left(\partial_r u-{\rm i}ku\right)=\lim_{r\to\infty}r^{\frac{d-1}2}\left(
\partial_r\Delta u-{\rm i}k\Delta u\right)=0,\quad r=|x|.
\end{equation}
The source $f$ is assumed to be a microlocally isotropic Gaussian random field of order $-m$ (cf. Definition \ref{def})
such that its covariance operator is a classical pseudo-differential operator with principal symbol $\mu(x)|\xi|^{-m}$, where $\mu$ is called the strength of the random source $f$. The microlocally isotropic Gaussian random field can be viewed as one of the generalized fractional Gaussian random fields (cf. \cite{LW21}), which cover a wide class of frequently studied Gaussian random fields, such as the white noise with $m=0$ and translations of the classical fractional Brownian motions with $m\in(d,d+1)$. In particular, if $m\le d$, the random field $f$ is too rough to exist point-wisely, and should be interpreted as a distribution.

For the white noise case with $m=0$, the random source can be equivalently rewritten as $f=\sqrt{\mu}\dot{W}$, where $\dot{W}$ denotes the white noise. Then the biharmonic wave equation \eqref{eq:model} is interpreted as a stochastic partial differential equation driven by an additive white noise. The It\^o isometry can be used in this case to derive the recovery formula for the strength $\mu$. We refer to \cite{BCL16,BCLZ14} and \cite{BCL17} for the inverse random source problem of the acoustic and elastic wave equations, respectively, where the strength $\mu$ is shown to be uniquely determined by the variance of the wave field at multiple frequencies.

As a generalized Gaussian random field, the microlocally isotropic Gaussian random field with a general $m$ is studied in recent years (cf. \cite{CHL19,HLO14,LHL20,LW21}) to handle a larger class of Gaussian random fields whose increments are not independent if $m\neq0$ and hence the It\^o isometry is not available. For the case $m\in[d,d+\frac12)$, by using the microlocal analysis of the Fourier integral operators, it was shown in \cite{LHL20} for both the acoustic and elastic wave equations that the strength $\mu$ is uniquely determined by almost surely a single realization of the amplitude of the scattering field averaged over the frequency band. In \cite{LW21} and \cite{LW21c}, these results are extended to rougher sources with $m\in(d-2,d]$ for the acoustic and electromagnetic wave equations by exploring an equivalent model in terms of the fractional Laplacian $\sqrt{\mu}(-\Delta)^{-\frac m4}\dot{W}$. We mention that the existing work do not contain the case $m=0$ for $d=2,3$, i.e., the white noise case is not included in the framework of the study for microlocally isotropic Gaussian random fields. To the best of our knowledge, little is known for stochastic inverse problems on higher order wave equations. This is the first study on the inverse random source problem of the biharmonic operator wave equation. 

In this paper, we intend to examine both the direct and inverse source problems for the biharmonic operator. A particular interest is on the rough source with $m\le d$ such that $f$ should be interpreted as a distribution. 
We show that the direct problem is well-posed with $m\in(d-6,d]$ in the distribution sense (cf. Theorem \ref{tm:wellposed}). The results of this work contain the white noise case $m=0$ and even rougher cases $m<0$ for both the two- and three-dimensional problems due to the fact that the fundamental solution to the biharmonic operator is more regular than that of the Helmholtz operator (cf. Lemma \ref{lm:Hk}).  For the inverse problem, we prove that the strength $\mu$ of the random source is uniquely determined by almost surely a single realization of the magnitude of the wave field $u$ averaged over the frequency band (cf. Theorems \ref{tm:u3d} and \ref{tm:u2d}), which is summarized in the following theorem.

\begin{theorem}
Let $f$ be a centered microlocally isotropic Gaussian random field of order $-m$ in a bounded domain $D\subset\mathbb R^d$ with $m\in(d-6,d]$ and $d=2,3$, and $U\subset\mathbb R^d$ be a bounded domain that having a positive distance to $D$, i.e., dist$(D,U)=r_0>0$. For any $x\in U$, it holds almost surely that
\[
\lim_{T\to\infty}\frac1T\int_T^{2T}k^{m+7-d}|u(x;k)|^2dk=\frac1{16(2\pi)^{d-1}}\int_D\frac1{|x-\zeta|^{d-1}}\mu(\zeta)d\zeta=:T_d(x).
\]
Moreover, the strength $\mu$ can be uniquely determined by data $\{T_d(x)\}_{x\in U}$.
\end{theorem}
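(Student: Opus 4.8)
The plan is to combine an asymptotic analysis of the second moment $\mathbb E|u(x;k)|^2$ as $k\to\infty$ with an ergodic-type argument to obtain the almost-sure limit, and then to invert the resulting Riesz potential to recover $\mu$. First I would represent the solution through the fundamental solution $\Phi_k$ of the biharmonic operator, $u(x;k)=\int_D\Phi_k(x-y)f(y)\,dy$, interpreted in the distributional/duality sense guaranteed by Theorem \ref{tm:wellposed}. The partial-fraction identity
\[
\frac{1}{\Delta^2-k^4}=\frac{1}{2k^2}\Big(\frac{1}{\Delta-k^2}-\frac{1}{\Delta+k^2}\Big)
\]
splits $\Phi_k=\frac{1}{2k^2}(\Phi_k^{H}-\Phi_k^{M})$ into an outgoing Helmholtz part $\Phi_k^{H}$ and an exponentially decaying modified-Helmholtz part $\Phi_k^{M}$; by Lemma \ref{lm:Hk} the latter and all cross terms are negligible as $k\to\infty$, so the leading behaviour is governed by $\frac{1}{2k^2}\Phi_k^{H}$. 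Using that the covariance operator of $f$ is a classical pseudo-differential operator with principal symbol $\mu(y)|\xi|^{-m}$, I would write
\[
\mathbb E|u(x;k)|^2=\int_D\!\int_D\Phi_k(x-y)\overline{\Phi_k(x-z)}\,K_f(y,z)\,dy\,dz,\qquad K_f(y,z)=\frac{1}{(2\pi)^d}\int_{\mathbb R^d}e^{\mathrm i(y-z)\cdot\xi}c(y,\xi)\,d\xi,
\]
with $c(y,\xi)\sim\mu(y)|\xi|^{-m}$ for large $|\xi|$.

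The core is then a stationary-phase analysis of this oscillatory integral. Inserting the asymptotics of $\Phi_k^{H}$ (the explicit $e^{\mathrm ik|x|}/(4\pi|x|)$ in $d=3$, and the Hankel expansion $H_0^{(1)}(k|x|)\sim(2/\pi k|x|)^{1/2}e^{\mathrm i(k|x|-\pi/4)}$ in $d=2$), the amplitude carries a factor $k^{-2}$ from $\Phi_k$ and, in $d=2$, an extra $k^{-1/2}$ per factor. Writing $w=y-z$ and linearising $|x-y|-|x-z|\approx-\widehat{(x-y)}\cdot w$, the combined phase becomes $w\cdot(\xi-k\widehat{(x-y)})$; integrating in $w$ localises $\xi=k\widehat{(x-y)}$, whence $|\xi|^{-m}=k^{-m}$ and the diagonal $y=z$ survives. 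This yields
\[
\mathbb E|u(x;k)|^2=\frac{1}{16(2\pi)^{d-1}}\,k^{-(m+7-d)}\int_D\frac{\mu(y)}{|x-y|^{d-1}}\,dy+\text{l.o.t.},
\]
reproducing $T_d(x)$ after multiplication by $k^{m+7-d}$. \textbf{The main obstacle is making this rigorous}: one must show that the lower-order terms of the classical symbol $c(y,\xi)$, the modified-Helmholtz and cross contributions, and the curvature corrections to the phase all produce strictly lower powers of $k$, with remainder estimates uniform enough to survive the frequency average, and all of this for rough sources $m\in(d-6,d]$ where the integrals live only in the distributional sense.

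To upgrade the convergence of the expectation to an almost-sure statement I would set $X_T=\frac1T\int_T^{2T}k^{m+7-d}|u(x;k)|^2\,dk$ and estimate $\mathrm{Var}(X_T)$. Since $u(x;\cdot)$ is a centred complex Gaussian field, Isserlis' theorem expresses the fourth moment $\mathbb E[|u(x;k_1)|^2|u(x;k_2)|^2]$ through the two-frequency covariances $\mathbb E[u(x;k_1)\overline{u(x;k_2)}]$ and $\mathbb E[u(x;k_1)u(x;k_2)]$, which decay in $|k_1-k_2|$ because of the oscillation $e^{\mathrm i(k_1\pm k_2)|x-y|}$. This decay gives $\mathrm{Var}(X_T)\to 0$ with a summable rate along $T=n\in\mathbb N$, so Chebyshev's inequality and the Borel--Cantelli lemma yield $X_n\to T_d(x)$ almost surely; a monotonicity/continuity estimate in $T$ then fills the gaps to give the limit over all $T$, for each fixed $x\in U$ (and simultaneously for all $x$ by separability).

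It remains to recover $\mu$ from $\{T_d(x)\}_{x\in U}$. The map $\mu\mapsto T_d$ is, up to the constant, the Riesz potential of order one, $w:=|x|^{-(d-1)}*(\mu\mathbf 1_D)$. If two strengths produce the same data, their difference $g=\mu_1-\mu_2$ (supported in $\overline D$) gives $w$ vanishing on $U$; since $w$ is real-analytic in $\mathbb R^d\setminus\overline D$, it vanishes on the whole (connected) exterior, so $w$ is continuous and compactly supported in $\overline D$. By Paley--Wiener both $\widehat w$ and $\widehat g$ are entire, and $\widehat g(\xi)=c_d|\xi|\,\widehat w(\xi)$; expanding in homogeneous parts at $\xi=0$ shows each homogeneous piece of $\widehat w$ would have to make $|\xi|\widehat w$ a polynomial, which is impossible unless it vanishes because $|\xi|$ is not real-analytic at the origin. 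Hence $\widehat w\equiv0$, so $g\equiv0$ and $\mu$ is uniquely determined.
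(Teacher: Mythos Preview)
Your overall strategy matches the paper's: split $\Phi_k$ into the oscillatory Helmholtz piece and the exponentially decaying modified-Helmholtz piece, extract the leading $k$-asymptotics of $\mathbb E|u(x;k)|^2$ from the former via an oscillatory-integral analysis, control the two-frequency covariances to get an ergodic-in-frequency result, and then invert a Riesz-type potential for uniqueness. A few points where your execution diverges from the paper are worth flagging.

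First, Lemma \ref{lm:Hk} is the wrong tool for discarding the modified-Helmholtz and cross terms: that lemma gives $H^{-s_1}\to H^{s_2}$ bounds for the \emph{full} biharmonic volume potential with only polynomial decay $k^{-(3-s)}$, not the exponential smallness you need. The paper instead works directly with the kernel and uses $e^{-k|x-y|}\le k^{-M}$ for $|x-y|\ge r_0>0$; the real work is then to show that the kernel integrals $\int\!\int e^{-k|x-y|}\cdots K_f(y,z)\,dy\,dz$ make sense and are bounded when $m\le 0$, which the paper does case by case via the explicit distributional forms of $K_f$ in Lemma \ref{lm:Kf} (Dirac deltas and regularised Riesz kernels) together with the Pizzetti formula. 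Your sketch acknowledges this as ``the main obstacle'' but does not indicate how you would handle the distributional kernel; this is precisely where the paper invests its effort.

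Second, in $d=2$ the paper does not rely on the leading Hankel asymptotic alone: it truncates the expansion at order $N=3$, defines an auxiliary field $u_3$, proves the covariance estimates for $u_3$, and separately bounds $|u-u_3|$ pathwise (Lemma \ref{lm:u2error}) using the Sobolev regularity of $f$ from Lemma \ref{lm:f}. Using only the leading term would leave an $O(k^{-1})$ relative error in the amplitude, which does not obviously survive the weighted frequency average for all $m\in(-4,2]$.

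Third, your Borel--Cantelli step along $T=n\in\mathbb N$ requires a summable variance, but the paper's estimates give $\mathrm{Var}(X_T)\lesssim T^{-2m}$ for small $m\in(0,\tfrac12)$, which is not summable over the integers; a geometric subsequence $T_n=2^n$ fixes this, and your ``fill the gaps'' remark suggests you have this in mind, but it should be stated. (The paper itself only writes the $L^2$ computation explicitly.) Your uniqueness argument via Paley--Wiener and the non-analyticity of $|\xi|$ is a valid alternative to the paper's citation of \cite{LPS08}; it is essentially the same mechanism that underlies that reference.
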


The paper is organized as follows. In Section \ref{sec:pre}, we introduce the regularity and kernel functions of microlocally isotropic Gaussian random fields, as well as the fundamental solution to the biharmonic operator wave equation. Section \ref{sec:direct} addresses the well-posedness of the direct problem and the regularity of the solution for the stochastic biharmonic wave equation. Section \ref{sec:inverse} is devoted to the inverse problem, where the uniqueness is obtained for the reconstruction of the strength of the random source. Numerical experiments are presented in Section \ref{sec:num} for the white noise case to illustrate the theoretical results. The paper is concluded with some general remarks and future work in Section \ref{co}. 

\section{Preliminaries}
\label{sec:pre}

In this section, we introduce some basic properties of microlocally isotropic Gaussian random fields and the fundamental solution to the biharmonic operator wave equation, which are essential for the study of both the direct and inverse problems. 

\subsection{Microlocally isotropic Gaussian random fields}

Let us begin with the definition of a microlocally isotropic Gaussian random field, and then we discuss the regularity and the kernel function of such a random field.  

\begin{definition}\label{def}
A Gaussian random field $f$ is said to be microlocally isotropic of order $-m$ in $D\subset\mathbb R^d$ if its covariance operator $\mathcal Q_f$ is a classical pseudo-differential operator and the principal symbol of $\mathcal Q_f$ has the form $\mu(x)|\xi|^{-m}$ with $\mu\in C_0^{\infty}(D)$ and $\mu\ge0$, where $\mu$ is called the strength of the random field $f$.
\end{definition}

As is known, a pseudo-differential operator can be expressed through the Fourier transform
\begin{align}\label{eq:Qf1}
(\mathcal Q_f\varphi)(x):=\frac1{(2\pi)^d}\int_{\mathbb R^d}e^{{\rm i}x\cdot\xi}\sigma(x,\xi)\hat\varphi(\xi)d\xi,
\end{align}
where $\sigma\in S^{-m}(\mathbb R^d\times\mathbb R^d)$ is called the symbol of the pseudo-differential operator. Here 
\[
S^{-m}(\mathbb R^d\times\mathbb R^d):=\Big\{a(z,\xi)\in C^{\infty}(\mathbb R^d\times\mathbb R^d): |\partial_\xi^\alpha\partial_z^\beta a(z,\xi)|\le C_{\alpha,\beta}(1+|\xi|)^{-m-|\alpha|}\Big\}
\]
is the space of symbols of order $-m$, where $\alpha,\beta$ are multi-indices whose length are defined by $|\alpha|:=\sum_{j=1}^d\alpha_j$ for any multi-index $\alpha=(\alpha_1,\cdots,\alpha_d)$.

The microlocally isotropic Gaussian random field covers a wide range of frequently studied Gaussian random fields such as the white noise and translated fraction Brownian motions (cf. \cite{LW21}). It possesses several important properties, which play an important role in the recovery of the strength for the random source.  For example, the symbol $\sigma$ of the covariance operator $\mathcal Q_f$ is invariant under changes of variables. Moreover, the Schwartz kernel $K_f$ given by 
\begin{align}\label{eq:Qf2}
 (\mathcal Q_f\varphi)(x)=\int_{\mathbb R^d}K_f(x,y)\varphi(y)dy
\end{align}
is a homogeneous function of $x-y$ and is singular only at the diagonal. Combining \eqref{eq:Qf1} and \eqref{eq:Qf2} yields that the kernel $K_f$ can be represented in terms of its symbol $\sigma$ via the Fourier transform (cf. \cite{LW21}): 
\begin{align*}
K_f(y,z)=\frac1{(2\pi)^d}\int_{\mathbb R^d}e^{{\rm i}(y-z)\cdot\xi}\sigma(z,\xi)d\xi.
\end{align*}

It is clear to note that the regularity of the random field $f$ is determined by its covariance operator $\mathcal Q_f$, and hence is determined essentially by the principal symbol of the pseudo-differential operator $\mathcal Q_f$. To investigate the regularity of $f$, we consider the following fractional Gaussian random field (cf. \cite{LW21,LSSW16}): 
\[
\tilde f:=\sqrt{\mu}(-\Delta)^{-\frac m4}\dot{W}, 
\]
where $\dot{W}$ denotes the white noise and can be understood as the formal derivative of the real-valued $d$-parameter Brownian sheet $W$ (cf. \cite[Chapter 2.1]{HOUZ10}). The regularity of $\tilde f$ is relatively easy to get since the regularity of the white noise has already been investigated. It is shown in \cite[Proposition 2.5]{LW21} that $\tilde f$ satisfies Assumption \ref{as:f} and has the principal symbol $\mu(x)|\xi|^{-m}$. Consequently, the microlocally isotropic Gaussian random field $f$ has the same regularity as $\tilde f$. The result is stated in the following lemma and the proof can be found in \cite[Lemma 2.6]{LW21}.

\begin{lemma}\label{lm:f}
Let $f$ be a microlocally isotropic Gaussian random field of order $-m$ in $D\subset\mathbb R^d$.
\begin{itemize}
\item[(i)] If $m\in(d,d+2)$, then $f\in C^{0, \alpha}(D)$ almost surely for all
$\alpha\in(0,\frac{m-d}2)$.
\item[(ii)] If $m\in(-\infty,d]$, then $f\in W^{\frac{m-d}2-\epsilon,p}(D)$ almost surely for all $\epsilon>0$ and $p>1$.
\end{itemize}
\end{lemma}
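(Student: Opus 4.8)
The plan is to reduce the regularity of the centered Gaussian field $f$ to the analysis of the diagonal singularity of the Schwartz kernel $K_f$ of its covariance operator $\mathcal Q_f$, exploiting that every moment of $f$ is governed by the covariance $\mathbb E[f(x)f(y)]=K_f(x,y)$. Since $\mathcal Q_f$ is a classical pseudo-differential operator of order $-m$ with principal symbol $\mu(x)|\xi|^{-m}$, its kernel is the Fourier transform of the symbol, and its leading-order behaviour near the diagonal is dictated by the inverse Fourier transform of $|\xi|^{-m}$, a homogeneous distribution of degree $m-d$ that behaves like $c_{d,m}|x-y|^{m-d}$; the lower-order terms of the classical expansion contribute strictly smoother pieces. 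Equivalently, following the route indicated in the text, I would compare $f$ with the model field $\tilde f=\sqrt{\mu}(-\Delta)^{-m/4}\dot W$, which shares the same principal symbol: the covariance operators of $f$ and $\tilde f$ differ by an operator of order strictly less than $-m$, which only adds regularity, so it suffices to prove the claim for $\tilde f$ and transfer it to $f$.

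For part (i), with $m\in(d,d+2)$ one has $m-d\in(0,2)$, so $K_f$ is continuous up to the diagonal and the increment variance obeys
\[
\mathbb E|f(x)-f(y)|^2=K_f(x,x)+K_f(y,y)-2K_f(x,y)\lesssim|x-y|^{m-d}.
\]
Since $f(x)-f(y)$ is Gaussian, the moment formula gives $\mathbb E|f(x)-f(y)|^p\lesssim|x-y|^{p(m-d)/2}$ for every $p$. The Kolmogorov--Chentsov theorem then yields a modification of $f$ that is H\"older continuous of any order $\alpha<(m-d)/2-d/p$; letting $p\to\infty$ gives $f\in C^{0,\alpha}(D)$ almost surely for all $\alpha\in(0,(m-d)/2)$.

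For part (ii), with $m\le d$ the field is merely a distribution, and the target is an almost-sure bound in the negative-order space with $s=(m-d)/2-\epsilon$. After localizing by the bounded domain $D$, I would estimate $\mathbb E\|f\|_{W^{s,p}(D)}^p$ through the Bessel potential realization: setting $g=(1-\Delta)^{s/2}f$, the operator $(1-\Delta)^{s/2}\mathcal Q_f(1-\Delta)^{s/2}$ is a classical pseudo-differential operator of order $2s-m=-d-2\epsilon<-d$, whose kernel is therefore bounded on the diagonal. Hence $\mathbb E|g(x)|^2$ is finite uniformly in $x$, and the Gaussian moment formula $\mathbb E|g(x)|^p=C_p(\mathbb E|g(x)|^2)^{p/2}$ combined with Fubini yields $\mathbb E\|f\|_{W^{s,p}(D)}^p<\infty$, so $f\in W^{s,p}(D)$ almost surely. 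The passage between the Bessel potential and Sobolev--Slobodeckij scales and the extension from $p=2$ to general $p>1$ are handled by standard embeddings.

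The main obstacle in either regime is the precise control of the diagonal singularity of $K_f$: one must confirm that the principal symbol $\mu(x)|\xi|^{-m}$ alone produces the leading $|x-y|^{m-d}$ behaviour and that the residual terms of the classical expansion are genuinely more regular. This is exactly where the calculus of classical pseudo-differential operators and the Fourier analysis of homogeneous distributions are needed, and it is cleanest to import the comparison with $\tilde f$ from \cite{LW21}, for which the white-noise regularity $\dot W\in W^{-d/2-\epsilon,p}$ and the order-$m/2$ smoothing by $(-\Delta)^{-m/4}$ make the estimate explicit, with the factor $\sqrt{\mu}\in C_0^\infty(D)$ preserving the regularity.
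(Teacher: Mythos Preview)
Your proposal is sound, and in fact the paper does not give its own proof of this lemma at all: it simply states the result and refers the reader to \cite[Lemma 2.6]{LW21}. The paragraph preceding the lemma sketches the intended strategy, namely to transfer the regularity question from $f$ to the model field $\tilde f=\sqrt{\mu}(-\Delta)^{-m/4}\dot W$, whose covariance has the same principal symbol, and then to read off the regularity of $\tilde f$ from the known Sobolev regularity of white noise together with the order-$m/2$ smoothing of $(-\Delta)^{-m/4}$; this is precisely the route you describe in your final paragraph.

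Your direct arguments go somewhat beyond what the paper records. For part~(i) you supply the Kolmogorov--Chentsov argument via the increment bound $\mathbb E|f(x)-f(y)|^2\lesssim|x-y|^{m-d}$, and for part~(ii) you give the Bessel-potential moment estimate by noting that $(1-\Delta)^{s/2}\mathcal Q_f(1-\Delta)^{s/2}$ has order $-d-2\epsilon<-d$ and hence a bounded diagonal. Both are correct and standard; they amount to carrying out explicitly what the cited reference does, rather than merely invoking it. The one point worth tightening is the justification that the residual term $r(x,y)$ in the kernel expansion is regular enough near the diagonal (e.g.\ at least $C^1$) so that $r(x,x)+r(y,y)-2r(x,y)=o(|x-y|^{m-d})$; this follows from the classical symbol calculus, but you should state it rather than leave it implicit.
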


By Lemma \ref{lm:f}, if $m\in(d,d+2)$, then $f$ is almost surely H\"older continuous and is relatively smooth; 
if $m\in(-\infty,d]$, then the random field $f$ is too rough to exist point-wisely. For such a rough $f$, it should be interpreted as a distribution in the Schwartz distribution space $\mathcal D'$. The covariance operator is defined by
\[
\langle\mathcal Q_f\varphi,\psi\rangle:=\mathbb E[\langle f,\varphi\rangle\langle f,\psi\rangle]\quad\forall~\varphi,\psi\in\mathcal D,
\]
where $\mathcal D$ stands for the space of test functions with $\mathcal D'$ being its dual space, and 
\[
\langle f,\varphi\rangle:=\int_{\mathbb R^d}f(x)\varphi(x)dx
\] 
is the dual product. In this paper, we are interested in rough sources which satisfy following assumption.

\begin{assumption}\label{as:f}
 Assume that the random source $f$ is a  centered microlocally isotropic Gaussian random field of order $-m$ in a bounded domain $D\subset\mathbb R^d$ with strength $\mu$ and $m\in(d-6,d]$.
 \end{assumption}
 
According to the relationship between $f$ and $\tilde f$, the leading term in the Schwartz kernel of $f$ is the same as the one of $\tilde f$. Based on the expression of the kernel of $\tilde f$ given in \cite[Theorem 3.3]{LSSW16}, we have the following explicit expression for the kernel $K_f$.

\begin{lemma}\label{lm:Kf}
Let $f$ be a microlocally isotropic Gaussian random field of order $-m$ in $D\subset\mathbb R^d$. Denote by $H:=\frac{m-d}2$ the general Hurst parameter. 
\begin{itemize}
\item[(i)] If $H$ is a nonnegative integer, then
\[
K_f(x,y)=C_1(m,d)|x-y|^{2H}\ln|x-y|+r(x,y),
\]
where 
$C_1(m,d)=(-1)^{H+1}2^{-m+1}\pi^{-\frac d2}/(H!\Gamma(\frac m2))$ with $\Gamma(\cdot)$ being the Gamma function, and $r(x,y)$ denotes the residual which is more regular than the leading term.
\item[(ii)] If $H$ is not a nonnegative integer and $m>0$, then
\[
K_f(x,y)=C_2(m,d)|x-y|^{2H}+r(x,y),
\]
where  $C_2(m,d)=2^{-m}\pi^{-\frac
d2}\Gamma(-H)/\Gamma(\frac m2)$.
\item[(iii)] If $H$ is not a nonnegative integer and $m\in(-2n-2,-2n)$ with $n$ being a nonnegative integer, then
\[
K_f(x,y)=C_2(m,d)|x-y|^{2H}\left[1-\sum_{j=0}^n|x-y|^{2j}c_j\Delta^j\delta(x-y)\right]+r(x,y),
\]
where $c_0=1$ and 
\[
\quad c_j=\frac{A_d}{2^j j! d(d+2)\cdots(d+2j-2)}
\]
for $j\ge1$ with $A_d=2\pi^{\frac d2}/\Gamma(\frac d2)$ being the surface area of the unit sphere in $\mathbb R^d$, 
and $\delta(\cdot)$ is the Dirac delta function centered at $0$.
\item[(iv)] If $H$ is not a nonnegative integer and $m=-2n$ with $n$ being a nonnegative integer, then 
\[
K_f(x,y)=(-\Delta)^n\delta(x-y)+r(x,y).
\]
\end{itemize}
\end{lemma}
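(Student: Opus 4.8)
The plan is to reduce the full kernel to the inverse Fourier transform of the principal symbol and then read the four cases off the pole structure of the resulting family of homogeneous distributions. First I would isolate the leading singularity. Since $\mathcal Q_f$ is a classical pseudo-differential operator, its symbol expands as $\sigma(x,\xi)\sim\mu(x)|\xi|^{-m}+\sum_{j\ge1}\sigma_{-m-j}(x,\xi)$ with principal part homogeneous of degree $-m$ and each remaining term of strictly lower order. Substituting into $K_f(x,y)=\frac1{(2\pi)^d}\int_{\mathbb R^d}e^{{\rm i}(x-y)\cdot\xi}\sigma(y,\xi)\,d\xi$, every lower-order term yields a kernel whose diagonal singularity is weaker than that of the principal part, while a smooth cutoff removing the behaviour near $\xi=0$ contributes only a smooth kernel; all of these are absorbed into the residual $r(x,y)$. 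Hence the leading term of $K_f$ is governed entirely by the inverse Fourier transform of $\mu(y)|\xi|^{-m}$. Because $\tilde f=\sqrt\mu(-\Delta)^{-m/4}\dot W$ carries the same principal symbol $\mu(x)|\xi|^{-m}$ by \cite[Proposition 2.5]{LW21}, the leading kernel terms of $f$ and $\tilde f$ coincide, so it suffices to evaluate the kernel of $\tilde f$, for which I may invoke \cite[Theorem 3.3]{LSSW16}.

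The core computation is the distributional inverse Fourier transform $G_m(x):=\frac1{(2\pi)^d}\int_{\mathbb R^d}e^{{\rm i}x\cdot\xi}|\xi|^{-m}\,d\xi$, with the smooth factor $\mu$ evaluated on the diagonal riding along (its off-diagonal variation being smooth and thus part of $r$). For $0<m<d$ the classical Fourier identity for homogeneous functions gives $G_m(x)=\frac{2^{-m}\pi^{-\frac d2}\Gamma(\frac{d-m}2)}{\Gamma(\frac m2)}|x|^{m-d}$; writing $H=\frac{m-d}2$ and $\Gamma(-H)=\Gamma(\frac{d-m}2)$ this is exactly $C_2(m,d)|x|^{2H}$. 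I would then regard $m\mapsto G_m$ as a meromorphic family of tempered homogeneous distributions and continue it analytically in $m$, the entire case structure being dictated by the poles of the ratio $\Gamma(-H)/\Gamma(\frac m2)$.

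The case analysis then proceeds as follows. When $H$ is a nonnegative integer, $\Gamma(-H)$ has a simple pole, and extracting the finite part of the continuation replaces the pure power $|x|^{2H}$ by $|x|^{2H}\ln|x|$, the associated residue producing $C_1(m,d)=(-1)^{H+1}2^{-m+1}\pi^{-\frac d2}/(H!\,\Gamma(\frac m2))$; this is (i). When $m=-2n$, the factor $1/\Gamma(\frac m2)$ vanishes since $\Gamma(-n)$ has a pole, so the power term drops and $G_m$ reduces to the inverse Fourier transform of $|\xi|^{2n}=(|\xi|^2)^n$, namely $(-\Delta)^n\delta$; this is (iv). For $m\in(-2n-2,-2n)$, $G_m$ is homogeneous of degree $m-d<-d$ and not locally integrable, so the continuation requires a Hadamard finite part: subtracting the Taylor regularization of the test function up to the relevant even order produces the Dirac-derivative corrections $\sum_{j=0}^n|x|^{2j}c_j\Delta^j\delta(x)$ multiplying $C_2(m,d)|x|^{2H}$, which is (iii). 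The coefficients follow from $\Delta^j|x|^{2j}=2^jj!\,d(d+2)\cdots(d+2j-2)$ together with the angular integral $A_d=2\pi^{\frac d2}/\Gamma(\frac d2)$, giving $c_j=A_d/\Delta^j|x|^{2j}$ in agreement with the stated formula. Finally (ii), $m>0$ with $H$ not a nonnegative integer, is the non-resonant regime where $C_2(m,d)|x|^{2H}$ holds by direct continuation from $0<m<d$.

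The main obstacle is exactly this analytic continuation across the resonant values $m\in d+2\mathbb Z_{\ge0}$, where the logarithms appear, and $m\in-2\mathbb Z_{\ge0}$ together with the intervals $(-2n-2,-2n)$, where the Dirac derivatives appear: one must perform the residue and finite-part bookkeeping carefully enough to recover $C_1$, $C_2$, and $c_j$ with correct signs and Gamma factors, and this is the content imported from \cite[Theorem 3.3]{LSSW16}. A secondary point needing care is confirming that the full covariance $\mathcal Q_f$, beyond the principal part it shares with $\tilde f$, contributes only to the regular residual $r(x,y)$, so that the leading singular term of $K_f$ is genuinely the one computed for $\tilde f$.
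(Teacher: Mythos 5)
Your proposal follows essentially the same route as the paper, which proves the lemma by exactly your reduction: since $\tilde f=\sqrt{\mu}(-\Delta)^{-\frac m4}\dot W$ shares the principal symbol $\mu(x)|\xi|^{-m}$ with $f$ (via \cite[Proposition 2.5]{LW21}), the leading kernel terms coincide, and the explicit expressions are then imported from \cite[Theorem 3.3]{LSSW16}. Your additional sketch of the meromorphic continuation of $\mathcal F^{-1}\left[|\xi|^{-m}\right]$ and the residue/finite-part bookkeeping (with the checks $\Delta^j|x|^{2j}=2^jj!\,d(d+2)\cdots(d+2j-2)$ and the Riesz identity recovering $C_2$) is consistent with the cited result and fills in details the paper delegates entirely to the references.
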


\begin{remark}\label{rk:Kf}
In cases (iii) and (iv) of Lemma \ref{lm:Kf}, all the partial derivatives for the Dirac delta function should be interpreted as distributions, and hence the kernels $K_f$ in these cases should also be interpreted as distributions (cf. \cite{LD72}). More precisely, for any test functions $\varphi,\psi\in\mathcal D$, $K_f$ given in (iii) and (iv) satisfies
\begin{align*}
&\int_{\mathbb R^d}\int_{\mathbb R^d}K_f(x,y)\varphi(x)\psi(y)dxdy\\
=&~C_2(m,d)\int_{\mathbb R^d}\int_{\mathbb R^d}|x-y|^{2H}\left[\varphi(x)\psi(y)-\sum_{j=0}^nc_j|x-y|^{2j}\varphi(x)\Delta^j\psi(x)\right]dxdy
\end{align*}
and
\begin{align*}
\int_{\mathbb R^d}\int_{\mathbb R^d}K_f(x,y)\varphi(x)\psi(y)dxdy
=\int_{\mathbb R^d}\varphi(x)(-\Delta)^n\psi(x)dx,
\end{align*}
respectively.
\end{remark}

\subsection{The fundamental solution}

Denote by $\Phi(x,y,k)$ the outgoing fundamental solution to the biharmonic wave operator $\mathcal L=\Delta^2-k^4$ such that
\begin{align}\label{eq:funda}
\Delta^2\Phi(x,y,k)-k^4\Phi(x,y,k)=-\delta(x-y)\quad\text{in} ~ \mathbb R^d,
\end{align}
where $\delta$ is the Dirac delta distribution. The expression of $\Phi$ can be obtained from two different approaches. 

The first approach makes use of the operator decomposition. Since the biharmonic wave operator can be written as the product of the Helmholtz and modified Helmholtz operators, i.e., $\mathcal L=(\Delta-k^2)(\Delta+k^2)$, the fundamental solution $\Phi$ is a linear composition of the fundamental solutions to the Helmholtz equation $(\Delta +k^2)u=0$ and the modified Helmholtz equation $(\Delta -k^2)u=0$, respectively. Hence, we may obtain that $\Phi$ depends on $|x-y|$ and is given in the form (cf. \cite{TH17,TS18})
\begin{align*}
\Phi(x,y,k)&=\frac{\rm i}{8k^2}\left(\frac{k}{2\pi|x-y|}\right)^{\frac{d-2}2}\left(H_{\frac{d-2}2}^{(1)}(k|x-y|)+\frac{2\rm i}{\pi}K_{\frac{d-2}2}(k|x-y|)\right)\\
&=\frac{\rm i}{8k^2}\left(\frac{k}{2\pi|x-y|}\right)^{\frac{d-2}2}\left(H_{\frac{d-2}2}^{(1)}(k|x-y|)+{\rm i}^{\frac{d}2+1}H_{\frac{d-2}2}^{(1)}({\rm i}k|x-y|)\right),
\end{align*}
where $H_{\nu}^{(1)}$
is the Hankel function of the first kind and order $\nu\in\mathbb R$, and 
\begin{align}\label{eq:Mac}
K_{\nu}(z)=\frac{\pi}2{\rm i}^{\nu+1}H_{\nu}^{(1)}({\rm i}z),\quad -\pi<\arg z\le\frac{\pi}2
\end{align}
is the Macdonald function (also known as the modified Bessel function of the second kind) of order $\nu\in\mathbb R$.
More precisely, we have 
\begin{equation}\label{eq:Phi}
\Phi(x,y,k)=\left\{
\begin{aligned}
&\frac{\rm i}{8k^2}\left(H_0^{(1)}(k|x-y|)-H_0^{(1)}({\rm i}k|x-y|)\right),\quad& d=2,\\
&\frac1{8\pi k^2|x-y|}\left(e^{{\rm i}k|x-y|}-e^{-k|x-y|}\right),\quad& d=3,
\end{aligned}
\right.
\end{equation}
where we use the fact
\[
H_{\frac12}^{(1)}(z)=\sqrt{\frac2{\pi z}}\frac{e^{{\rm i}z}}{\rm i}.
\]

The fundamental solution $\Phi$ may also be derived from the Fourier transform. Let  
\begin{align}\label{eq:Phik}
\Phi_k(x):=\mathcal F^{-1}\left[\frac1{|\xi|^4-k^4}\right](x),
\end{align}
where $\mathcal F^{-1}$ denotes the inverse Fourier transform. Taking the Fourier transform of \eqref{eq:funda} gives that $\Phi_k(x-y)$ also satisfies \eqref{eq:funda} and hence
\[
\Phi_k(x-y)=\Phi(x,y,k).
\]

\section{The direct problem}
\label{sec:direct}

In this section, we examine the well-posedness of the direct problem \eqref{eq:model}--\eqref{eq:radiation} in a proper sense when the source $f$ is a rough random field satisfying Assumption \ref{as:f}. The basic idea is to derive an equivalent  integral equation, which will also be used in the recovery of the strength for the random source. 

Using the fundamental solution $\Phi$ or $\Phi_k$ given in \eqref{eq:Phi} or \eqref{eq:Phik}, we define the volume potential
\begin{align*}
\mathcal H_k(\phi)(x):=-\int_{\mathbb R^d}\Phi(x,y,k)\phi(y)dy
=-(\Phi_k*\phi)(x),
\end{align*}
where $*$ denotes the convolution of $\Phi_k$ and $\phi$. 

\begin{lemma}\label{lm:Hk}
Let $B$ and $G$ be two bounded domains in $\mathbb R^d$. The operator $\mathcal H_k: H^{-s_1}(B)\to H^{s_2}(G)$ is bounded and satisfies
\[
\|\mathcal H_k\|_{\mathcal{L}(H^{-s_1}(B),H^{s_2}(G))}\lesssim\frac1{k^{3-s}}
\]
for $s:=s_1+s_2\in(0,3)$ with $s_1,s_2\ge0$.
\end{lemma}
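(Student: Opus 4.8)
The plan is to identify $\mathcal H_k$ with the outgoing solution operator for \eqref{eq:model}--\eqref{eq:radiation}, namely (up to an immaterial sign) $(\Delta^2-k^4)^{-1}$, and to prove the estimate first at the extreme cases $s_2=0$ and $s_2$ large with $s_1=0$ fixed, recovering the general statement by interpolating in $s_2$ and dualizing in $s_1$. Throughout I take $k\ge1$, which is the regime relevant to the inverse problem. Since $\mathcal H_k$ is convolution with $-\Phi_k$, whose symbol is $(|\xi|^4-k^4)^{-1}$ interpreted through the limiting absorption prescribed by \eqref{eq:radiation}, the organizing device is the partial fraction identity
\[
\frac1{|\xi|^4-k^4}=\frac1{2k^2}\left(\frac1{|\xi|^2-k^2}-\frac1{|\xi|^2+k^2}\right),
\]
which splits $\mathcal H_k=\frac1{2k^2}(\mathcal H_k^+-\mathcal H_k^-)$ into the outgoing Helmholtz volume potential $\mathcal H_k^+$ and the nonsingular modified Helmholtz volume potential $\mathcal H_k^-$.

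First I would establish the endpoint $s=0$, i.e.\ $\|\mathcal H_k\|_{\mathcal L(L^2(B),L^2(G))}\lesssim k^{-3}$. The modified Helmholtz part is immediate, since its symbol is bounded by $k^{-2}$ and hence $\|\mathcal H_k^-\|_{\mathcal L(L^2,L^2)}\le k^{-2}$ on all of $\mathbb R^d$. For the Helmholtz part I would invoke the free resolvent (limiting absorption) estimate $\|\chi_G(-\Delta-k^2-{\rm i}0)^{-1}\chi_B\|_{\mathcal L(L^2,L^2)}\lesssim k^{-1}$, a standard high-frequency resolvent bound that holds because $B$ and $G$ are bounded. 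Multiplying by the prefactor $\frac1{2k^2}$ gives $\|\mathcal H_k\|_{\mathcal L(L^2(B),L^2(G))}\lesssim k^{-2}(k^{-1}+k^{-2})\lesssim k^{-3}$. The boundedness of $B$ and $G$ is essential: on all of $\mathbb R^d$ the operator is unbounded because of the sphere $|\xi|=k$.

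Next I would gain regularity, now treating the full biharmonic operator rather than the split, because the leading (order $|\xi|^{-2}$) parts of $\mathcal H_k^+$ and $\mathcal H_k^-$ cancel and only their difference enjoys the order $|\xi|^{-4}$ decay that produces four derivatives of smoothing. Writing $u=\mathcal H_k\phi$ with $\phi\in L^2(B)$, the identity $\Delta^2u=k^4u\pm\phi$ together with an interior elliptic estimate for $\Delta^2$ on a slightly larger domain $G'\Supset G$ gives $\|u\|_{H^4(G)}\lesssim\|\phi\|_{L^2}+k^4\|u\|_{L^2(G')}\lesssim k\|\phi\|_{L^2}$, using the $L^2$ bound already proved. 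Interpolating between $\|\mathcal H_k\|_{\mathcal L(L^2,L^2)}\lesssim k^{-3}$ and $\|\mathcal H_k\|_{\mathcal L(L^2,H^4)}\lesssim k$ yields $\|\mathcal H_k\|_{\mathcal L(L^2(B),H^{s_2}(G))}\lesssim k^{s_2-3}$ for $s_2\in[0,3]$, i.e.\ the claim with $s_1=0$. Since the adjoint of $\mathcal H_k$ is the corresponding incoming operator and obeys the same bounds with the roles of $B$ and $G$ exchanged, duality gives the claim with $s_2=0$, $s_1\in[0,3]$; interpolating once more in the pair $(s_1,s_2)$ delivers the bound $k^{-(3-s)}$ throughout the range $s=s_1+s_2\in(0,3)$ on which it is decaying.

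The main obstacle is precisely the singularity of the symbol on the sphere $|\xi|=k$, i.e.\ the embedded Helmholtz factor: a naive Fourier-multiplier bound fails there, and $\mathcal H_k\phi$ lies only in $L^2_{\mathrm{loc}}$, not $L^2(\mathbb R^d)$. This is where the boundedness of $B$ and $G$ enters, through the limiting absorption principle, and it is what limits the $L^2\to L^2$ gain to $k^{-3}$ rather than the $k^{-4}$ suggested by the $|\xi|^{-4}$ decay. The nonsingular factor $(|\xi|^2+k^2)^{-1}$ is harmless, contributing only the $k^{-2}$ prefactor. On the kernel side, this improvement is visible in \eqref{eq:Phi}: the singular parts of the Helmholtz and modified Helmholtz fundamental solutions cancel, so $\Phi$ is continuous across the diagonal, which is the reason the biharmonic smoothing is stronger than in the Helmholtz case. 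A secondary technical point is the meaning of $H^{-s_1}(B)$ for larger $s_1$ in the duality and interpolation steps, which I would handle with the convention $H^{-s_1}(B)=(H^{s_1}(B))^\ast$ so that the dual pairing is compatible with the compact support of the source.
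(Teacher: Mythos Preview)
Your approach is correct and genuinely different from the paper's. The paper proves the bound by a direct bilinear Fourier-side argument: for $\phi\in C_0^\infty(B)$, $\psi\in C_0^\infty(G)$ it writes $\langle\mathcal H_k\phi,\psi\rangle$ as an integral against $(|\xi|^4-k^4)^{-1}$, splits $\mathbb R^d$ into the region $||\xi|-k|>k/2$ (handled by elementary size bounds) and the critical shell $||\xi|-k|<k/2$, and on the shell uses the reflection $\xi\mapsto\xi^*=(2k/|\xi|-1)\xi$ across the sphere $|\xi|=k$ to symmetrize away the simple pole, controlling the resulting differences with the Hardy--Littlewood maximal function. This yields the bilinear estimate $|\langle\mathcal H_k\phi,\psi\rangle|\lesssim k^{-(3-s)}\|\phi\|_{H^{-s_1}}\|\psi\|_{H^{-s_2}}$ in one stroke, for all admissible $(s_1,s_2)$ simultaneously. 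Your route instead imports the high-frequency free Helmholtz resolvent bound (limiting absorption) to get the $L^2\to L^2$ endpoint, bootstraps four derivatives via interior elliptic regularity for $\Delta^2$, and then reaches the full range by interpolation and duality. Your argument is more structural and makes transparent the arithmetic $k^{-3}=k^{-2}\cdot k^{-1}$ (partial-fraction prefactor times Helmholtz resolvent), at the cost of invoking the limiting absorption principle as a black box and of some bookkeeping with Sobolev dualities on bounded domains that you correctly flag; the paper's argument is self-contained and avoids interpolation, but is computationally heavier (the explicit cancellation of the reflected symbol is checked separately in $d=2$ and $d=3$).
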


\begin{proof}
For any $\phi\in C_0^\infty(B)$ and $\psi\in C_0^\infty(G)$, we still denote by $\phi$ and $\psi$ the zero extensions to $\mathbb R^d\setminus\overline B$ and $\mathbb R^d\setminus\overline G$, respectively. Then
\begin{align*}
\langle\mathcal H_k\phi,\psi\rangle&=\langle\widehat{\mathcal H_k\phi},\hat\psi\rangle=-\int_{\mathbb R^d}\frac1{|\xi|^4-k^4}\hat\phi(\xi)\hat\psi(\xi)d\xi\\
&=-\int_{\Omega_1}\frac{(1+|\xi|^2)^{\frac{s}2}}{|\xi|^4-k^4}\widehat{\mathcal J^{-s_1}\phi}(\xi)\widehat{\mathcal J^{-s_2}\psi}(\xi)d\xi-\int_{\Omega_2}\frac{(1+|\xi|^2)^{\frac{s}2}}{|\xi|^4-k^4}\widehat{\mathcal J^{-s_1}\phi}(\xi)\widehat{\mathcal J^{-s_2}\psi}(\xi)d\xi\\
&=:{\mathscr A}+{\mathscr B},
\end{align*}
where $\hat\phi=\mathcal{F}[\phi]$ is the Fourier transform of $\phi$,
\begin{align*}
\Omega_1:=&\left\{\xi\in\mathbb R^d:||\xi|-k|>\frac k2\right\}=\left\{\xi\in\mathbb R^d:|\xi|>\frac{3k}2\text{ or }|\xi|<\frac k2\right\},\\
\Omega_2:=&\left\{\xi\in\mathbb R^d:||\xi|-k|<\frac k2\right\}=\left\{\xi\in\mathbb R^d:\frac k2<|\xi|<\frac{3k}2\right\},
\end{align*}
and $\mathcal J^s:\mathcal S(\mathbb R^d)\to\mathcal S(\mathbb R^d)$ is the Bessel potential of order $s\in\mathbb R$ defined by (cf. \cite{LW21b})
\[
\mathcal J^s\phi:=(I-\Delta)^{\frac s2}\phi=\mathcal F^{-1}\left[(1+|\cdot|^2)^{\frac s2}\hat\phi\right]\quad\forall~\phi\in\mathcal S(\mathbb R^d)
\]
with $\mathcal S(\mathbb R^d)$ being the Schwartz space of all rapidly decreasing smooth functions.

For any $s\in(0,\frac32)$, the term $\mathscr A$ satisfies
\begin{align*}
|\mathscr A| &\le \int_{\Omega_1}\frac{(1+|\xi|^2)^{\frac{s}2}}{||\xi|-k|(|\xi|+k)(|\xi|^2+k^2)}|\widehat{\mathcal J^{-s_1}\phi}||\widehat{\mathcal J^{-s_2}\psi}|d\xi\\
&\lesssim\frac1k\int_{\{|\xi|>\frac{3k}2\}\cup\{|\xi|<\frac{k}2\}}\frac{(1+|\xi|^2)^{\frac{s}2}}{(|\xi|+k)(|\xi|^2+k^2)}|\widehat{\mathcal J^{-s_1}\phi}||\widehat{\mathcal J^{-s_2}\psi}|d\xi\\
&\lesssim \frac1k\int_{\{|\xi|>\frac{3k}2\}}\frac1{|\xi|^{3-s}}|\widehat{\mathcal J^{-s_1}\phi}||\widehat{\mathcal J^{-s_2}\psi}|d\xi+\frac1k\int_{\{|\xi|<\frac k2\}}\frac1{k^{3-s}}|\widehat{\mathcal J^{-s_1}\phi}||\widehat{\mathcal J^{-s_2}\psi}|d\xi\\
&\lesssim\frac1{k^{4-s}}\|\phi\|_{H^{-s_1}(B)}\|\psi\|_{H^{-s_2}(G)}.
\end{align*}

To estimate term $\mathscr B$, we use the change of variables
\[
\xi^*=\left(\frac{2k}{|\xi|}-1\right)\xi,
\]
which maps the domain $\Omega_{21}:=\{\xi:\frac k2<|\xi|<k\}$ to the domain $\Omega_{22}:=\{\xi:k<|\xi|<\frac{3k}2\}$, and has the Jacobian
\[
J(\xi)=\left|\det\left(\frac{\partial\xi^*}{\partial\xi}\right)\right|=\left(\frac{2k}{|\xi|}-1\right)^{d-1}.
\]
Then term $\mathscr B$ satisfies
\begin{align*}
\mathscr B&=-\int_{\Omega_{21}\cup\Omega_{22}}\frac{(1+|\xi|^2)^{\frac{s}2}}{|\xi|^4-k^4}\widehat{\mathcal J^{-s_1}\phi}(\xi)\widehat{\mathcal J^{-s_2}\psi}(\xi)d\xi\\
&=-\int_{\Omega_{21}}\frac{(1+|\xi|^2)^{\frac{s}2}}{|\xi|^4-k^4}\widehat{\mathcal J^{-s_1}\phi}(\xi)\widehat{\mathcal J^{-s_2}\psi}(\xi)d\xi-\int_{\Omega_{21}}\frac{(1+|\xi^*|^2)^{\frac{s}2}}{|\xi^*|^4-k^4}\widehat{\mathcal J^{-s_1}\phi}(\xi^*)\widehat{\mathcal J^{-s_2}\psi}(\xi^*)J(\xi)d\xi\\
&=-\int_{\Omega_{21}}\left[\frac{1}{|\xi|^4-k^4}+\frac{J(\xi)}{|\xi^*|^4-k^4}\right](1+|\xi|^2)^{\frac{s}2}\widehat{\mathcal J^{-s_1}\phi}(\xi)\widehat{\mathcal J^{-s_2}\psi}(\xi)d\xi\\
&\quad -\int_{\Omega_{21}}\frac{J(\xi)}{|\xi^*|^4-k^4}\left[(1+|\xi^*|^2)^{\frac{s}2}\widehat{\mathcal J^{-s_1}\phi}(\xi^*)\widehat{\mathcal J^{-s_2}\psi}(\xi^*)-(1+|\xi|^2)^{\frac{s}2}\widehat{\mathcal J^{-s_1}\phi}(\xi)\widehat{\mathcal J^{-s_2}\psi}(\xi)\right]d\xi\\
&=-\int_{\Omega_{21}}\left[\frac{1}{|\xi|^4-k^4}+\frac{J(\xi)}{|\xi^*|^4-k^4}\right](1+|\xi|^2)^{\frac{s}2}\widehat{\mathcal J^{-s_1}\phi}(\xi)\widehat{\mathcal J^{-s_2}\psi}(\xi)d\xi\\
&\quad -\int_{\Omega_{21}}\frac{J(\xi)}{|\xi^*|^4-k^4}\left[(1+|\xi^*|^2)^{\frac{s}2}-(1+|\xi|^2)^{\frac{s}2}\right]\widehat{\mathcal J^{-s_1}\phi}(\xi)\widehat{\mathcal J^{-s_2}\psi}(\xi)d\xi\\
&\quad -\int_{\Omega_{21}}\frac{J(\xi)}{|\xi^*|^4-k^4}(1+|\xi^*|^2)^{\frac{s}2}\left[\widehat{\mathcal J^{-s_1}\phi}(\xi^*)-\widehat{\mathcal J^{-s_1}\phi}(\xi)\right]\widehat{\mathcal J^{-s_2}\psi}(\xi)d\xi\\
&\quad -\int_{\Omega_{21}}\frac{J(\xi)}{|\xi^*|^4-k^4}(1+|\xi^*|^2)^{\frac{s}2}\widehat{\mathcal J^{-s_1}\phi}(\xi^*)\left[\widehat{\mathcal J^{-s_2}\psi}(\xi^*)-\widehat{\mathcal J^{-s_2}\psi}(\xi)\right]d\xi\\
&=:\mathscr B_1+\mathscr B_2+\mathscr B_3+\mathscr B_4.
\end{align*}
Note that
\begin{align*}
\left|\frac{1}{|\xi|^4-k^4}+\frac{J(\xi)}{|\xi^*|^4-k^4}\right|=&\left|\frac1{(|\xi|-k)(|\xi|+k)(|\xi|^2+k^2)}+\frac{2k-|\xi|}{|\xi|(k-|\xi|)(3k-|\xi|)(|\xi|^2-4k|\xi|+5k^2)}\right|\\
=&\left|\frac{2k(3|\xi|^2-6k|\xi|+k^2)}{|\xi|(|\xi|+k)(|\xi|^2+k^2)(3k-|\xi|)(|\xi|^2-4k|\xi|+5k^2)}\right|
\lesssim\frac1{k^4}
\end{align*}
 if $d=2$, and 
 \begin{align*}
 \left|\frac{1}{|\xi|^4-k^4}+\frac{J(\xi)}{|\xi^*|^4-k^4}\right|=&\left|\frac1{(|\xi|-k)(|\xi|+k)(|\xi|^2+k^2)}+\frac{(2k-|\xi|)^2}{|\xi|^2(k-|\xi|)(3k-|\xi|)(|\xi|^2-4k|\xi|+5k^2)}\right|\\
=&\left|\frac{-2(|\xi|^4-4k|\xi|^3+5k^2|\xi|^2-2k^3|\xi|-2k^4)}{|\xi|^2(|\xi|+k)(|\xi|^2+k^2)(3k-|\xi|)(|\xi|^2-4k|\xi|+5k^2)}\right|
\lesssim\frac1{k^4}
\end{align*}
if $d=3$,
which leads to
\[
|\mathscr B_1|\lesssim\frac1{k^{4-s}}\|\phi\|_{H^{-s_1}(B)}\|\psi\|_{H^{-s_2}(G)}.
\]
For term $\mathscr B_2$, since
\begin{align*}
&\left|\frac{J(\xi)}{|\xi^*|^4-k^4}\left[(1+|\xi^*|^2)^{\frac{s}2}-(1+|\xi|^2)^{\frac{s}2}\right]\right|\\
=&\left|\frac{(2k-|\xi|)^{d-1}(|\xi^*|^2-|\xi|^2)}{|\xi|^{d-1}(k-|\xi|)(3k-|\xi|)(|\xi|^2-4k|\xi|+5k^2)}s(1+\theta|\xi^*|^2+(1-\theta)|\xi|^2)^{s-1}\right|\\
=&\left|\frac{2(2k-|\xi|)^{d-1}(|\xi^*|+|\xi|)}{|\xi|^{d-1}(3k-|\xi|)(|\xi|^2-4k|\xi|+5k^2)}s(1+\theta|\xi^*|^2+(1-\theta)|\xi|^2)^{s-1}\right|
\lesssim\frac1{k^{4-s}}
\end{align*}
for some $\theta\in(0,1)$,
we then get
\[
|\mathscr B_2|\lesssim\frac1{k^{4-s}}\|\phi\|_{H^{-s_1}(B)}\|\psi\|_{H^{-s_2}(G)}.
\]
For term $\mathscr B_3$, it holds (cf. \cite[Theorem 3.2]{LW21b})
\begin{align*}
|\mathscr B_3| &\le\int_{\Omega_{21}}\left|\frac{J(\xi)(1+|\xi^*|^2)^{\frac{s}2}(|\xi^*|-|\xi|)}{|\xi^*|^4-k^4}\right|\left|M(|\nabla\widehat{\mathcal J^{-s_1}\phi}|)(\xi^*)+M(|\nabla\widehat{\mathcal J^{-s_1}\phi}|)(\xi)\right|\left|\widehat{\mathcal J^{-s_2}\psi}(\xi)\right|d\xi\\
&=\int_{\Omega_{21}}\left|\frac{2(2k-|\xi|)^{d-1}(1+|\xi^*|^2)^{\frac{s}2}}{|\xi|^{d-1}(3k-|\xi|)(|\xi|^2-4k|\xi|+5k^2)}\right|\\\\
&\qquad\times\left|M(|\nabla\widehat{\mathcal J^{-s_1}\phi}|)(\xi^*)+M(|\nabla\widehat{\mathcal J^{-s_1}\phi}|)(\xi)\right|\left|\widehat{\mathcal J^{-s_2}\psi}(\xi)\right|d\xi\\
&\lesssim\frac1{k^{3-s}}\|\phi\|_{H^{-s_1}(B)}\|\psi\|_{H^{-s_2}(G)},
\end{align*}
where $M(f)$ is the Hardy--Littlewood maximal function of $f$.
The term $\mathscr B_4$ can be estimated similarly. 

Combining the above estimates, we conclude that
\[
|\langle\mathcal H_k\phi,\psi\rangle|\lesssim\frac1{k^{3-s}}\|\phi\|_{H^{-s_1}(B)}\|\psi\|_{H^{-s_2}(G)}\quad\forall~\phi\in C_0^\infty(B),~\psi\in C_0^\infty(G),
\]
where $s\in(0,3)$. The proof is completed by extending the above result to $\phi\in H^{-s_1}(B)$ and $\psi\in H^{-s_2}(G)$ according to the facts $C_0^\infty(B)$ is dense in $L^2(B)$ and $H^{-s_1}(B)=\overline{L^2(B)}^{\|\cdot\|_{H^{-s_1}(B)}}$.
\end{proof}

\begin{theorem}\label{tm:wellposed}
Let $f$ satisfy Assumption \ref{as:f}. Then the problem \eqref{eq:model}--\eqref{eq:radiation} admits a unique solution 
\begin{align}\label{eq:u}
u(x;k)=-\int_D\Phi(x,y,k)f(y)dy
\end{align}
in the distribution sense such that $u\in W_{loc}^{\gamma,q}(\mathbb R^d)$ almost surely for any $q>1$ and $0<\gamma<\min\left\{\frac{6-d+m}2,\frac{6-d+m}2+\left(\frac1q-\frac12\right)d\right\}$.
\end{theorem}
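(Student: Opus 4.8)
The plan is to split the argument into three parts: first verify that the volume potential $u=\mathcal H_k(f)=-\Phi_k*f$ displayed in \eqref{eq:u} solves \eqref{eq:model}--\eqref{eq:radiation}, then establish uniqueness through the factorization of $\mathcal L$, and finally extract the stated regularity by feeding Lemma \ref{lm:f} into Lemma \ref{lm:Hk} and closing with a Sobolev embedding. For existence, since $f$ is a compactly supported distribution in $D$ and $\Phi_k\in L^1_{loc}$, the convolution $u=-\Phi_k*f$ is a well-defined tempered distribution; applying $\mathcal L$ and using $\mathcal L\Phi_k=-\delta$ from \eqref{eq:funda}--\eqref{eq:Phik} gives $\mathcal L u=-(\mathcal L\Phi_k)*f=\delta*f=f$, so \eqref{eq:model} holds in the distribution sense. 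For the radiation conditions, for $x$ away from $\overline D$ I would write $u(x;k)=-\langle f,\Phi(x,\cdot,k)\rangle$, which is smooth in $x$ because $y\mapsto\Phi(x,y,k)$ is a smooth test function near $\overline D$; differentiating under the pairing and inserting the large-$|x|$ asymptotics of $\Phi$ and $\Delta\Phi$ read off from \eqref{eq:Phi} then yields \eqref{eq:radiation}.

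For uniqueness, let $w$ be the difference of two solutions, so $\mathcal L w=(\Delta-k^2)(\Delta+k^2)w=0$ with homogeneous radiation conditions. Setting $p:=(\Delta-k^2)w=\Delta w-k^2 w$, the two conditions in \eqref{eq:radiation} combine, after multiplying by $r^{\frac{d-1}2}$, into $\partial_r p-{\rm i}kp\to 0$, while $(\Delta+k^2)p=\mathcal L w=0$; thus $p$ is an entire radiating solution of the Helmholtz equation, and by Rellich's lemma together with unique continuation $p\equiv0$. Consequently $w$ solves the modified Helmholtz equation $(\Delta-k^2)w=0$ on all of $\mathbb R^d$. The radiation condition on $w$ excludes the exponentially growing mode $e^{k|x|}$, leaving a bounded (hence tempered) solution; since the symbol $-|\xi|^2-k^2$ never vanishes, the Fourier transform forces $\hat w=0$, so $w=0$.

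For the regularity, Assumption \ref{as:f} gives $m\le d$, so Lemma \ref{lm:f}(ii) with $p=2$ yields $f\in H^{\frac{m-d}2-\epsilon}(D)$ almost surely for every $\epsilon>0$. I then apply Lemma \ref{lm:Hk} with $s_1=\frac{d-m}2+\epsilon\ge0$ and any $s_2\in[0,\frac{6-d+m}2-\epsilon)$: here $s_1+s_2>0$, $s_1+s_2<\frac{d-m}2+\frac{6-d+m}2=3$, and $\frac{6-d+m}2>0$ because $m>d-6$, so all hypotheses hold. Hence $u=\mathcal H_k(f)\in H^{s_2}(G)$ for every bounded $G$, i.e. $u\in H^{t}_{loc}(\mathbb R^d)$ almost surely for all $t<\frac{6-d+m}2$. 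To reach general $q$ I pass through a Sobolev embedding on bounded domains: for $1<q\le2$ one has $W^{t,2}(G)\hookrightarrow W^{t,q}(G)$, giving $\gamma<\frac{6-d+m}2$; for $q>2$ one has $W^{t,2}(G)\hookrightarrow W^{\gamma,q}(G)$ with $\gamma=t+(\frac1q-\frac12)d$, giving $\gamma<\frac{6-d+m}2+(\frac1q-\frac12)d$. These two ranges combine to $\gamma<\min\{\frac{6-d+m}2,\frac{6-d+m}2+(\frac1q-\frac12)d\}$, and intersecting the almost-sure events over a sequence $\epsilon_n\downarrow0$ keeps the conclusion almost sure.

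The step I expect to be the main obstacle is pinning down the sharp range of $\gamma$. The gain of regularity is capped at just below three derivatives by the singularity of $1/(|\xi|^4-k^4)$ on $\{|\xi|=k\}$, which is precisely what forces $s_1+s_2\in(0,3)$ in Lemma \ref{lm:Hk}; the transition from the $L^2$-based scale $H^{t}$ to $W^{\gamma,q}$ must then be carried out with the correct embedding on each side of $q=2$ to recover exactly the stated minimum. By contrast, the uniqueness argument and the verification that the potential solves the equation are comparatively routine once the factorization $\mathcal L=(\Delta-k^2)(\Delta+k^2)$ and the asymptotics of $\Phi$ from \eqref{eq:Phi} are in hand.
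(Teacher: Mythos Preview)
Your proposal is correct and follows essentially the same approach as the paper: both establish existence via the volume potential, derive regularity by feeding Lemma~\ref{lm:f} into Lemma~\ref{lm:Hk} and then passing through a Sobolev embedding $H^{s_2}(G)\hookrightarrow W^{\gamma,q}(G)$, and the paper simply cites \cite{LYZ} for uniqueness where you supply the factorization argument explicitly. Your direct use of $p=2$ in Lemma~\ref{lm:f}(ii) slightly streamlines the paper's chain (which first passes through a Kondrachov embedding $W^{\frac{m-d}{2}-\epsilon,p}(D)\hookrightarrow H^{-s_1}(D)$ before invoking Lemma~\ref{lm:Hk}), but the substance is identical.
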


\begin{proof}
The uniqueness can be proved similarly to the deterministic case given in \cite{LYZ}. It then suffices to show the existence and regularity of the solution. 

We first prove that the random field $u$ defined in \eqref{eq:u} is a solution of \eqref{eq:model}--\eqref{eq:radiation} in the distribution sense. In fact, for any test function $v\in\mathcal D$ with $\mathcal D$ being the $C_0^\infty(\mathbb R^d)$ equipped with a convex topology, it holds
\begin{align*}
\langle\Delta^2u-k^4u,v\rangle&=-\left\langle\int_{\mathbb R^d}(\Delta^2-k^4)\Phi(\cdot,y,k)f(y)dy,v\right\rangle\\
&=\left\langle\int_{\mathbb R^d}\delta(\cdot-y)f(y)dy,v\right\rangle=\langle f,v\rangle.
\end{align*}
Hence, $u=\mathcal H_kf$ satisfies \eqref{eq:model} in the distribution sense, where $f\in W^{\frac{m-d}2-\epsilon,p}(D)$ with $m\in(d-6,d]$ for any $\epsilon>0$ and $p>1$ according to Lemma \ref{lm:f}.
Moreover, for any $s_1\in(\frac{d-m}2,3)$ and $p\ge2$, the condition $\frac12>\frac1p-\frac{\frac{m-d}2-\epsilon+s_1}d$ is satisfied and hence the embedding
\[
W^{\frac{m-d}2-\epsilon,p}(D)\hookrightarrow H^{-s_1}(D)
\]
is continuous according to the Kondrachov embedding theorem. 

For any bounded domain $G\subset\mathbb R^d$ with a $C^1$-boundary, it follows from Lemma \ref{lm:Hk} that $\mathcal H_k: H^{-s_1}(D)\to H^{s_2}(G)$ is bounded for any positive $s_2<3-s_1<\frac{6-d+m}2$. 
Choosing $s_2=\frac{6-d+m}2-\epsilon$ for any sufficiently small $\epsilon>0$, 
then parameters $\gamma$ and $q$ given in the theorem satisfy $\gamma<s_2$ and $\frac1q>\frac12-\frac{s_2-\gamma}d$ such that the embedding
\[
\quad H^{s_2}(G)\hookrightarrow W^{\gamma,q}(G)
\]
is also continuous. We then conclude that $\mathcal H_k$ is bounded from $W^{\frac{m-d}2-\epsilon,p}(D)$ to $W^{\gamma,q}(G)$ with $p\ge2$, and hence $u=\mathcal H_kf\in W^{\gamma,q}(G)$, which completes the proof. 
\end{proof}

It is easy to verify that the solution $u=\mathcal H_kf$ obtained above is a linear combination of the solutions to the second order differential equations $\Delta u\pm k^2u=f$. In fact, we may rewrite the fundamental solution $\Phi$ as 
\begin{align*}
\Phi(x,y,k)=\frac1{2k^2}\Phi_+(x,y,k)-\frac1{2k^2}\Phi_-(x,y,k),
\end{align*}
where 
\begin{align*}
\Phi_+(x,y,k):=&\frac{\rm i}4\left(\frac{k}{2\pi|x-y|}\right)^{\frac{d-2}2}H_{\frac{d-2}2}^{(1)}(k|x-y|),\\
\Phi_-(x,y,k):=&\frac1{2\pi}\left(\frac{k}{2\pi|x-y|}\right)^{\frac{d-2}2}K_{\frac{d-2}2}(k|x-y|)
\end{align*}
are Green's functions to the second order linear operators $\Delta\pm k^2$ and satisfy 
\begin{align*}
\Delta \Phi_\pm(x,y,k)\pm k^2\Phi_\pm(x,y,k)=-\delta(x-y)\quad\text{in} ~ \mathbb R^d. 
\end{align*}
Then
\begin{align*}
v^\pm:=-\int_{\mathbb R^d}\Phi_\pm(x,y,k)\phi(y)dy
\end{align*}
are solutions of the equations
$\Delta v^{\pm}\pm k^2 v^{\pm}=f$
such that 
\begin{align}\label{eq:uv}
u=\frac1{2k^2}(v^+-v^-).
\end{align}

\section{The inverse problem}
\label{sec:inverse}

In this section, we study the inverse source problem, which is to determine the strength $\mu$ of the random source $f$ based on some proper data of the wave field $u$. Let $U\subset\mathbb R^d$ be the measurement domain, which is bounded and satisfies dist$(D,U)=r_0>0$. 

The inverse problem in the two dimensions is more tedious than the three-dimensional case due to the different form of the fundamental solution. For the two-dimensional problem, it requires an asymptotic expansion of the Hankel function and an additional truncation technique in order to get the recovery formula for $\mu$. In the following, we begin with the discussion on the three-dimensional case and then proceed to the more involved two-dimensional case.  

\subsection{The three-dimensional case} 

For the case $d=3$ and $m\in(-3,3]$, it follows from Assumption \ref{as:f} and \eqref{eq:Phi} that the distributional solution \eqref{eq:u} has the form
\begin{align}\label{eq:u3d}
u(x;k)=-\frac1{8\pi k^2}\int_{D}\frac{e^{{\rm i}k|x-y|}-e^{-k|x-y|}}{|x-y|}f(y)dy. 
\end{align} 

To get the recovery result based on the data from a single realization almost surely, the decay property of the solution with respect to the frequency is needed. 
According to the linear combination \eqref{eq:uv}, the required decay property of the solution $u$ can be obtained based on
an analogue of the ergodicity in the frequency domain of $v^+$ (cf. \cite{LHL20}) and the exponential decay property of $v^-$, which is stated in the following lemma.

\begin{lemma}\label{lm:u3d}
Let $f$ satisfy Assumption \ref{as:f} with $d=3$. For $k_1,k_2\ge1$, it holds uniformly for $x\in U$ that
\begin{align}\label{eq:ucouple1}
\big|\mathbb E\big[u(x;k_1)\overline{u(x;k_2)}\big]\big|&\lesssim k_1^{-2}k_2^{-2}\left[(k_1+k_2)^{-m}(1+|k_1-k_2|)^{-M_1}+k_1^{-M_2}+k_2^{-M_2}\right],\\\label{eq:ucouple2}
\left|\mathbb E\left[u(x;k_1)u(x;k_2)\right]\right|&\lesssim k_1^{-2}k_2^{-2}\left[(k_1+k_2)^{-M_1}(1+|k_1-k_2|)^{-m}+k_1^{-M_2}+k_2^{-M_2}\right],
\end{align}
where $M_1,M_2>0$ are arbitrary integers. 
In particular, if $k_1=k_2=k$, then
\begin{align}\label{eq:umoment}
\mathbb E|u(x;k)|^2=\left[\frac1{64\pi^2}\int_{D}\frac1{|x-\zeta|^2}\mu(\zeta)d\zeta\right] k^{-m-4}+O(k^{-m-5})
\end{align} 
as $k\to\infty$.
\end{lemma}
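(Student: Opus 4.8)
The plan is to reduce all three estimates to a single oscillatory double integral against the covariance kernel, and then to extract both its size and its leading asymptotics by passing to center-of-mass and difference variables near the diagonal. First I would rewrite the second moments through $K_f$. Setting $g_k(y):=\frac{e^{{\rm i}k|x-y|}-e^{-k|x-y|}}{|x-y|}$ for the weight appearing in \eqref{eq:u3d} and using that $f$ is a centered real Gaussian field, one has
\[
\mathbb E\big[u(x;k_1)\overline{u(x;k_2)}\big]=\frac1{64\pi^2k_1^2k_2^2}\int_D\int_D g_{k_1}(y)\,\overline{g_{k_2}(z)}\,K_f(y,z)\,dy\,dz,
\]
together with the same identity with $\overline{g_{k_2}}$ replaced by $g_{k_2}$ for $\mathbb E[u(x;k_1)u(x;k_2)]$. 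The prefactor already supplies the $k_1^{-2}k_2^{-2}$ in \eqref{eq:ucouple1}--\eqref{eq:ucouple2}. Because $x\in U$ and $\mathrm{dist}(D,U)=r_0>0$, each $g_k$ is smooth on $D$, so the only singularity of the integrand lies on the diagonal $y=z$, where Lemma \ref{lm:Kf} gives $K_f(y,z)=C_2(m,d)\mu(z)|y-z|^{2H}+r(y,z)$ with $2H=m-d$, up to the logarithmic and distributional modifications in the remaining cases.

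Next I would split $g_k=g_k^+-g_k^-$ into the oscillatory Helmholtz part $g_k^+=\frac{e^{{\rm i}k|x-\cdot|}}{|x-\cdot|}$ and the exponentially decaying modified-Helmholtz part $g_k^-=\frac{e^{-k|x-\cdot|}}{|x-\cdot|}$, in accordance with the decomposition \eqref{eq:uv}. Since $|x-y|\ge r_0$ on $D$, every term carrying a factor $g_{k_j}^-$ is $O(e^{-r_0k_j})$ and hence $O(k_1^{-M_2})+O(k_2^{-M_2})$ for all $M_2$; these are absorbed into the error terms of \eqref{eq:ucouple1}--\eqref{eq:ucouple2} and into the $O(k^{-m-5})$ remainder of \eqref{eq:umoment}. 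It remains to treat the purely oscillatory integral, whose phase is $k_1|x-y|\mp k_2|x-z|$ depending on whether the conjugate is present. Passing to $p=\frac{y+z}2$ and $w=z-y$ and Taylor expanding about $w=0$ yields the phases $(k_1-k_2)|x-p|+\tfrac12(k_1+k_2)\widehat N\cdot w$ in the conjugated case and $(k_1+k_2)|x-p|+\tfrac12(k_1-k_2)\widehat N\cdot w$ in the non-conjugated case, where $\widehat N=\frac{x-p}{|x-p|}$. The $p$-integral is a non-stationary phase integral whose phase gradient, $-(k_1-k_2)\widehat N$ respectively $-(k_1+k_2)\widehat N$, is nonzero (for $k_1\neq k_2$, respectively always) since $x\notin\overline D$; integrating by parts $M_1$ times produces the factors $(1+|k_1-k_2|)^{-M_1}$ and $(k_1+k_2)^{-M_1}$. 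The $w$-integral is, to leading order, a (truncated) Fourier transform of $C_2(m,d)|w|^{2H}$ evaluated at a frequency of modulus $\tfrac12(k_1+k_2)$ or $\tfrac12|k_1-k_2|$; since $\widehat{|\cdot|^{2H}}(\eta)\propto|\eta|^{-2H-d}=|\eta|^{-m}$, this contributes $(k_1+k_2)^{-m}$ or $(1+|k_1-k_2|)^{-m}$, respectively. Combining the two factors gives \eqref{eq:ucouple1} and \eqref{eq:ucouple2}.

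For the diagonal $k_1=k_2=k$ I would compute the constant explicitly. The conjugated phase then reduces to $k\,\widehat N\cdot w$ with no oscillation in $p$, so the $p$-integral contributes $\int_D|x-p|^{-2}\mu(p)\,dp$ to leading order, while the $w$-integral gives $\widehat{|\cdot|^{2H}}(k\widehat N)=c(H,d)k^{-m}$ with $c(H,d)=2^m\pi^{\frac d2}\Gamma(\tfrac m2)/\Gamma(-H)$. Since $C_2(m,d)\,c(H,d)=1$, the product with the prefactor $\frac1{64\pi^2k^4}$ reproduces exactly $\big[\frac1{64\pi^2}\int_D|x-\zeta|^{-2}\mu(\zeta)\,d\zeta\big]k^{-m-4}$, which is \eqref{eq:umoment}. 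The $O(k^{-m-5})$ remainder then gathers the next order in the Taylor expansions of the phase and of $|x-z|^{-1}$, the variation $\mu(z)-\mu(p)$, the residual kernel $r$, the truncation of the $w$-integral, and the subprincipal symbol, each of which costs one extra power of $k^{-1}$.

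I expect the main obstacle to be the sharp near-diagonal analysis uniformly in $x\in U$ and across the whole range $m\in(-3,3]$. In the rough regime $m\le0$ the kernel of Lemma \ref{lm:Kf}(iii)--(iv) contains the distributional terms $\Delta^j\delta(y-z)$, so the ``Fourier transform of $|w|^{2H}$'' must be read in the tempered-distribution sense and the $w$-integral replaced by derivatives of the smooth weight evaluated on the diagonal; verifying that these distributional pieces, together with the residual kernel $r$ and the truncation, genuinely fall into the claimed $(1+|k_1-k_2|)^{-m}$, $(k_1+k_2)^{-m}$, $k^{-M_2}$ and $O(k^{-m-5})$ orders rather than polluting the leading term is the delicate bookkeeping underlying the whole argument. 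The non-stationary phase estimates for the $p$-integral and the ergodicity-in-frequency analysis of the Helmholtz part from \cite{LHL20} supply the remaining ingredients.
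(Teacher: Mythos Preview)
Your decomposition into purely oscillatory, mixed, and purely decaying pieces via $g_k=g_k^+-g_k^-$ is exactly the paper's split into ${\rm I}_1$, ${\rm I}_2$, ${\rm I}_3$, and your center-of-mass/non-stationary-phase treatment of the oscillatory term is essentially what the paper imports from \cite[Lemma~A.1]{LLW}. The approaches coincide; only the allocation of effort differs. You sketch the oscillatory analysis in detail and dispatch the $g_k^-$ terms in one line as $O(e^{-r_0k_j})$, whereas the paper black-boxes the oscillatory term via citation and devotes almost all of its own work to the mixed and decaying terms ${\rm I}_2$, ${\rm I}_3$ in the rough range $m\le0$. There the claim ``every term carrying a factor $g_{k_j}^-$ is $O(e^{-r_0k_j})$'' is not immediate, because $K_f$ is only a distribution and the double integral must be read as a pairing; the paper handles this through a case-by-case analysis of the explicit kernels in Lemma~\ref{lm:Kf}, invoking the Pizzetti formula to control the near-diagonal contribution when the $\Delta^j\delta$ corrections are present and checking that derivatives falling on $e^{-k|x-\cdot|}/|x-\cdot|$ still decay exponentially. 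Your final paragraph correctly identifies this distributional bookkeeping as the obstacle, but you locate it primarily in the oscillatory $w$-integral rather than in the decaying terms ${\rm I}_2$, ${\rm I}_3$, which is where the paper actually carries it out.
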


\begin{proof}
According to \eqref{eq:u3d}, we get
\begin{align*}
\mathbb E\big[u(x;k_1)\overline{u(x;k_2)}\big]&=\frac1{64\pi^2k_1^2k_2^2}\int_D\int_D\frac{e^{{\rm i}k_1|x-y|}-e^{-k_1|x-y|}}{|x-y|}\frac{e^{-{\rm i}k_2|x-z|}-e^{-k_2|x-z|}}{|x-z|}\mathbb E[f(y)f(z)]dydz\\
&=\frac1{64\pi^2k_1^2k_2^2}\int_D\int_D\frac{e^{{\rm i}(k_1|x-y|-k_2|x-z|)}}{|x-y||x-z|}K_f(y,z)dydz\\
&\quad -\frac1{64\pi^2k_1^2k_2^2}\int_D\int_D\frac{e^{{\rm i}k_1|x-y|-k_2|x-z|}+e^{-k_1|x-y|-{\rm i}k_2|x-z|}}{|x-y||x-z|}K_f(y,z)dydz\\
&\quad +\frac1{64\pi^2k_1^2k_2^2}\int_D\int_D\frac{e^{-k_1|x-y|-k_2|x-z|}}{|x-y||x-z|}K_f(y,z)dydz\\
&=:{\rm I}_1(x;k_1,k_2)+{\rm I}_2(x;k_1,k_2)+{\rm I}_3(x;k_1,k_2).
\end{align*}

The first term ${\rm I}_1$ has been estimated in \cite[Lemma A.1]{LLW} and satisfies
\begin{align}\label{eq:11}
|{\rm I}_1(x;k_1,k_2)|\lesssim k_1^{-2}k_2^{-2}(k_1+k_2)^{-m}(1+|k_1-k_2|)^{-M_1}
\end{align}
and 
\begin{align}\label{eq:31}
{\rm I}_1(x;k,k)&=\frac1{64\pi^2k^4}\left[\left(\int_{D}\frac1{|x-\zeta|}\mu(\zeta)d\zeta\right) k^{-m}+O(k^{-m-1})\right]\notag\\
&=\left[\frac1{64\pi^2}\int_{D}\frac1{|x-\zeta|^2}\mu(\zeta)d\zeta\right] k^{-m-4}+O(k^{-m-5}),
\end{align}
where $M_1>0$ is an arbitrary integer. 

The other two terms can be estimated by utilizing Lemma \ref{lm:Kf} and the exponential decay property of the integrant, i.e.,
$
e^{-k_1|x-y|}\le k_1^{-M_2}
$
for any $M_2>0$ since $|x-y|$ is bounded below and above for any $x\in U$ and $y\in D$. Without loss of generality, we only consider the leading term in the kernel function $K_f$ and omit the residual $r$ since it is more regular than the corresponding leading term. For $d=3$, we get $m\in(-3,3]$ according to Assumption \ref{as:f}. 
We take the term ${\rm I}_2$ as an example, whose estimate is given separately for different cases of $m$.

(i) The case $m\in(0,3]$. By Lemma \ref{lm:Kf}, it holds
\begin{equation*}
K_f(y,z)=\begin{cases}
C_1(m,3)\ln|y-z|,&\quad m=3,\\
C_2(m,3)|y-z|^{m-3},&\quad m\in(0,3),
\end{cases}
\end{equation*}

and hence
\[
\int_D\int_D|K_f(y,z)|dydz<\infty
\] 
due to the boundedness of the domain $D$. Then the term ${\rm I}_2$ satisfies
\begin{align*}
|{\rm I}_2(x;k_1,k_2)|&\lesssim k_1^{-2}k_2^{-2}\int_D\int_D\frac{e^{-k_2|x-z|}+e^{-k_1|x-y|}}{|x-y||x-z|}|K_f(y,z)|dydz\notag\\
&\lesssim k_1^{-2}k_2^{-2}\left(k_1^{-M_2}+k_2^{-M_2}\right),
\end{align*}
where $M_2>0$ is an arbitrary integer.

(ii) The case $m=0$. We get from Lemma \ref{lm:Kf} (iv) with $n=0$ that
\[
K_f(y,z)=\delta(y-z),
\]
which leads to
\begin{align*}
|{\rm I}_2(x;k_1,k_2)|&=\left|\frac1{64\pi^2k_1^2k_2^2}\int_D\frac{e^{({\rm i}k_1-k_2)|x-y|}+e^{(-k_1-{\rm i}k_2)|x-y|}}{|x-y|^2}dy\right|\\
&\lesssim k_1^{-2}k_2^{-2}\left(k_1^{-M_2}+k_2^{-M_2}\right)
\end{align*}
with $M_2>0$ being an arbitrary integer.

(iii) The case $m\in(-2,0)$. Utilizing Lemma \ref{lm:Kf} (iii) with $n=0$ and Remark \ref{rk:Kf}, we get
\begin{align*}
|{\rm I}_2(x;k_1,k_2)|&\le\frac{|C_2(m,3)|}{64\pi^2k_1^2k_2^2}\left|\int_D\int_D\frac{e^{{\rm i}k_1|x-y|}}{|x-y|}\left(\frac{e^{-k_2|x-z|}}{|x-z|}-\frac{e^{-k_2|x-y|}}{|x-y|}\right)|y-z|^{m-3}dydz\right|\\
&\quad +\frac{|C_2(m,3)|}{64\pi^2k_1^2k_2^2}\left|\int_D\int_D\frac{e^{-k_1|x-y|}}{|x-y|}\left(\frac{e^{-{\rm i}k_2|x-z|}}{|x-z|}-\frac{e^{-{\rm i}k_2|x-y|}}{|x-y|}\right)|y-z|^{m-3}dydz\right|.
\end{align*}
Since the estimates are the same for the two terms on the right-hand side of the above inequality, to estimate the term $\rm I_2$, it suffices to estimate the integral
\begin{align*}
\mathcal I(x,y):&=\int_D\left(\frac{e^{-k_2|x-z|}}{|x-z|}-\frac{e^{-k_2|x-y|}}{|x-y|}\right)|y-z|^{m-3}dz\\
&=\int_{D-\{y\}}\left(F_x(y+\tilde z)-F_x(y)\right)|\tilde z|^{m-3}d\tilde z
\end{align*}
for $x\in U$ and $y\in D$,
where 
\[
F_x(z):=\frac{e^{-k_2|x-z|}}{|x-z|}.
\]
It is clear to note that $F_x$ is smooth in $D$ and its derivatives decay exponentially. Define
\[
\tilde F_x(y,r)=\frac1{A_3}\int_{|\tilde z|=r}F_x(y+\tilde z)ds(\tilde z),
\]
where $A_3$ is the surface area of the unit sphere in $\mathbb R^3$ given in Lemma \ref{lm:Kf} and $R^*:=\max\limits_{y,z\in D}|y-z|$. We get from \cite[$(1.1.5)$]{LD72} that 
\begin{align*}
|\mathcal I(x,y)|&=\left|\int_{D-\{y\}}\left(F_x(y+\tilde z)-F_x(y)\right)|\tilde z|^{m-3}d\tilde z\right|\\
&=\left|A_3\int_0^{R^*}\left(\tilde F_x(y,r)-F_x(y)\right)r^{m-1}dr\right|
\lesssim k_2^{-M_2},
\end{align*}
where we use the fact
\[
|\tilde F_x(y,r)-F_x(y)|\lesssim k_2^{-M_2}r^2
\]
based on the Pizzetti formula (cf. \cite{LD72})
\[
\tilde F_x(y,r)=F_x(y)+\frac{\Delta F_x(y)}{2\cdot 1! d}r^2+\cdots+\frac{\Delta^jF_x(y)}{2^jj!d(d+2)\cdots(d+2j-2)}r^{2j}+\cdots\quad\text{as}\quad r\to0
\] 
and the exponential decay property of $F_x$.

(iii) The case $m=-2$. Based on Lemma \ref{lm:Kf} (iv) with $n=1$ and Remark \ref{rk:Kf}, it holds that 
\begin{align*}
|{\rm II}(x;k_1,k_2)|&=\frac1{64\pi^2k_1^2k_2^2}\left|\int_D\left(\frac{e^{{\rm i}k_1|x-y|}}{|x-y|}(-\Delta_y)\frac{e^{-k_2|x-y|}}{|x-y|}+\frac{e^{-k_1|x-y|}}{|x-y|}(-\Delta_y)\frac{e^{-{\rm i}k_2|x-y|}}{|x-y|}\right)dy\right|\\
&\lesssim k_1^{-2}k_2^{-2}\left(k_1^{-M_2}+k_2^{-M_2}\right),
\end{align*}
where we use again the smoothness and exponential decay property of the function $\frac{e^{-k_2|x-y|}}{|x-y|}$ for $x\in U$ and $y\in D$.

(iv) The case $m\in(-3,-2)$. This case can be proved through the same procedure used in the case (iii) by applying Lemma \ref{lm:Kf} (iii) with $n=1$ and the Pizzetti formula.

We can now conclude from the above discussions that
\begin{align}\label{eq:12}
|{\rm I}_2(x;k_1,k_2)|
\lesssim k_1^{-2}k_2^{-2}\left(k_1^{-M_2}+k_2^{-M_2}\right),
\end{align}
which also leads to
\begin{align}\label{eq:32}
{\rm I}_2(x;k,k)=O(k^{-m-5})
\end{align}
by choosing $M_2>m+1$.

Following the similar estimates as those for the term ${\rm I}_2$, we may show that the term ${\rm I}_3$ satisfies
\begin{align}\label{eq:13}
|{\rm I}_3(x;k_1,k_2)|\lesssim k_1^{-2-M_2}k_2^{-2-M_2}\lesssim k_1^{-2}k_2^{-2}\left(k_1^{-M_2}+k_2^{-M_2}\right)
\end{align}
and
\begin{align}\label{eq:33}
{\rm I}_3(x;k,k)=O(k^{-m-5}).
\end{align}

As a result, the estimate \eqref{eq:ucouple1} is proved by combining \eqref{eq:11}, \eqref{eq:12} and \eqref{eq:13}, and the estimate \eqref{eq:umoment} is concluded by using \eqref{eq:31}, \eqref{eq:32} and \eqref{eq:33}.
The proof is completed by noting that the formula \eqref{eq:ucouple2} can be estimated based on the same procedure as the proof of \eqref{eq:ucouple1}.
\end{proof}

\begin{theorem}\label{tm:u3d}
Let $f$ satisfy Assumption \ref{as:f} with $d=3$. For any $x\in U$, it holds almost surely that
\begin{align}\label{eq:u3dlimit}
\lim_{T\to\infty}\frac1T\int_T^{2T}k^{m+4}|u(x;k)|^2dk=\frac1{64\pi^2}\int_{D}\frac1{|x-\zeta|^2}\mu(\zeta)d\zeta=:T_3(x).
\end{align}
Moreover, the strength $\mu$ can be uniquely recovered by the measurement $\{T_3(x)\}_{x\in U}$.
\end{theorem}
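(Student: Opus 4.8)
The plan is to establish the almost-sure limit in \eqref{eq:u3dlimit} by the classical two-step ergodicity argument in the frequency variable, and then to recover $\mu$ from the data $\{T_3(x)\}_{x\in U}$ by inverting an integral operator. For the first part, I would introduce the averaged quantity
\[
S_T(x):=\frac1T\int_T^{2T}k^{m+4}|u(x;k)|^2\,dk,
\]
and split the proof into a convergence-in-expectation step and a concentration step. First I would compute $\mathbb E[S_T(x)]$ using the single-frequency moment \eqref{eq:umoment} from Lemma \ref{lm:u3d}: since $k^{m+4}\,\mathbb E|u(x;k)|^2 = \frac1{64\pi^2}\int_D|x-\zeta|^{-2}\mu(\zeta)\,d\zeta + O(k^{-1})$, averaging over $[T,2T]$ and sending $T\to\infty$ gives $\mathbb E[S_T(x)]\to T_3(x)$, the $O(k^{-1})$ remainder contributing a term of order $O(T^{-1})$ (or $O(\ln T/T)$ at worst) to the average.

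The concentration step is where the coupled estimates \eqref{eq:ucouple1} and \eqref{eq:ucouple2} are essential. I would estimate the variance
\[
\mathbb E\big|S_T(x)-\mathbb E[S_T(x)]\big|^2
=\frac1{T^2}\int_T^{2T}\!\!\int_T^{2T} k_1^{m+4}k_2^{m+4}\,
\mathrm{Cov}\big(|u(x;k_1)|^2,|u(x;k_2)|^2\big)\,dk_1\,dk_2,
\]
and since $u$ is a centered Gaussian field, the fourth-moment Isserlis/Wick identity expresses the covariance of $|u(x;k_1)|^2$ and $|u(x;k_2)|^2$ in terms of products of the two-frequency correlations bounded in \eqref{eq:ucouple1} and \eqref{eq:ucouple2}. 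After inserting the $k_1^{-2}k_2^{-2}$ prefactors, the weights $k_1^{m+4}k_2^{m+4}$ are absorbed and the dominant contribution is controlled by the factor $(1+|k_1-k_2|)^{-M_1}$ together with $(k_1+k_2)^{-m}$; choosing $M_1$ large enough forces the double integral to be $O(T)$, so the variance is $O(T^{-1})$. This yields $\mathbb E|S_T(x)-\mathbb E[S_T(x)]|^2\to0$, and upgrading $L^2$ convergence to almost-sure convergence follows by evaluating along a discrete sequence $T_n=n^{1+\delta}$ (for which $\sum_n \mathbb E|S_{T_n}-\mathbb E S_{T_n}|^2<\infty$, giving a.s.\ convergence via Borel--Cantelli) and then controlling the oscillation of $S_T$ between consecutive $T_n$ by the monotonicity and continuity of the defining integral. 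The main obstacle here is the careful bookkeeping in the variance estimate: one must verify that the Gaussian fourth-moment expansion does not produce a non-decaying diagonal contribution, which is exactly what the sharp $(1+|k_1-k_2|)^{-M_1}$ decay in \eqref{eq:ucouple1} is designed to prevent.

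For the uniqueness of $\mu$, I would observe that $T_3(x)$ is, up to the constant $\frac1{64\pi^2}$, the Newtonian-type potential
\[
T_3(x)=\frac1{64\pi^2}\int_D \frac{\mu(\zeta)}{|x-\zeta|^2}\,d\zeta,\qquad x\in U,
\]
so recovering $\mu$ amounts to inverting this integral transform from data on the measurement set $U$, which has positive distance $r_0$ to $D$. Since the kernel $|x-\zeta|^{-2}$ is real-analytic in $x$ for $x\notin D$, the potential $T_3$ extends real-analytically off $D$, and the map $\mu\mapsto T_3|_U$ is injective: I would argue that if two admissible strengths produce the same data on $U$, their difference $\nu$ satisfies $\int_D|x-\zeta|^{-2}\nu(\zeta)\,d\zeta=0$ for all $x$ in $U$, hence (by analytic continuation) on all of $\mathbb R^d\setminus\overline D$, and then apply a density/uniqueness argument for this potential—for instance, testing against a complete family such as point-source kernels $|x-\cdot|^{-2}$ whose restrictions to $D$ are dense in a suitable sense, or relating the transform to a fractional/Riesz potential whose null space on compactly supported data is trivial—to conclude $\nu\equiv0$. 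I expect this uniqueness step to be comparatively soft given the analyticity of the kernel, with the genuine difficulty concentrated in the variance estimate of the ergodicity argument.
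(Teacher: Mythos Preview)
Your proposal is correct and follows essentially the same route as the paper: split into the expectation limit (via \eqref{eq:umoment}) and a variance estimate obtained from the Gaussian fourth-moment identity combined with \eqref{eq:ucouple1}--\eqref{eq:ucouple2}, then invert the Riesz-type potential for uniqueness (the paper simply cites \cite{LPS08} for the last step, while you sketch the analyticity argument). The only notable difference is that the paper stops at $\mathbb E|S_T-\mathbb E S_T|^2\to0$ and asserts almost-sure convergence without further comment, whereas you supply the subsequence/Borel--Cantelli upgrade explicitly---so your version is in fact the more complete one.
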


\begin{proof}
If $T_3(x)$ is known for $x\in U$, which is smooth in $U$, then
the strength $\mu$ can be uniquely recovered by solving a deconvolution problem (cf. \cite[Theorem 1]{LPS08}).

Next, we prove \eqref{eq:u3dlimit}. It follows from Lemma \ref{lm:u3d} that
\[
\lim_{T\to\infty}\frac1T\int_T^{2T}k^{m+4}\mathbb E|u(x;k)|^2dk=\frac1{64\pi^2}\int_{D}\frac1{|x-\zeta|^2}\mu(\zeta)d\zeta.
\]
It then suffices to show that
\begin{align}\label{eq:error}
\lim_{T\to\infty}\frac1T\int_T^{2T}Y(x;k)dk=0
\end{align}
almost surely with 
\begin{align}\label{eq:Y}
Y(x;k):&=k^{m+4}\left(|u(x;k)|^2-\mathbb E|u(x;k)|^2\right)\notag\\
&=k^{m+4}\left(u_{\rm r}(x;k)^2-\mathbb E[u_{\rm r}(x;k)^2]\right)+k^{m+4}\left(u_{\rm i}(x;k)^2-\mathbb E[u_{\rm i}(x;k)^2]\right)
\end{align}
being a real-valued random process. Here, $u_{\rm r}:=\Re[u]$ and $u_{\rm i}:=\Im[u]$ denote the real and imaginary parts of $u$, respectively. Note that
\begin{align*}
\mathbb E\left|\frac1T\int_T^{2T}Y(x;k)dk\right|^2=\frac1{T^2}\int_T^{2T}\int_T^{2T}\mathbb E[Y(x;k_1)Y(x;k_2)]dk_1dk_2.
\end{align*}
To show \eqref{eq:error}, we only need to show 
\begin{align}\label{eq:Ycouple}
\lim_{T\to\infty}\frac1{T^2}\int_T^{2T}\int_T^{2T}\mathbb E[Y(x;k_1)Y(x;k_2)]dk_1dk_2=0.
\end{align} 

According to \eqref{eq:Y}, we get
\begin{align*}
\mathbb E[Y(x;k_1)Y(x;k_2)]&=k_1^{m+4}k_2^{m+4}\mathbb E\left[\left(u_{\rm r}(x;k_1)^2-\mathbb E[u_{\rm r}(x;k_1)^2]\right)\left(u_{\rm r}(x;k_2)^2-\mathbb E[u_{\rm r}(x;k_2)^2]\right)\right]\\
&\quad +k_1^{m+4}k_2^{m+4}\mathbb E\left[\left(u_{\rm r}(x;k_1)^2-\mathbb E[u_{\rm r}(x;k_1)^2]\right)\left(u_{\rm i}(x;k_2)^2-\mathbb E[u_{\rm i}(x;k_2)^2]\right)\right]\\
&\quad +k_1^{m+4}k_2^{m+4}\mathbb E\left[\left(u_{\rm i}(x;k_1)^2-\mathbb E[u_{\rm i}(x;k_1)^2]\right)\left(u_{\rm r}(x;k_2)^2-\mathbb E[u_{\rm r}(x;k_2)^2]\right)\right]\\
&\quad +k_1^{m+4}k_2^{m+4}\mathbb E\left[\left(u_{\rm i}(x;k_1)^2-\mathbb E[u_{\rm i}(x;k_1)^2]\right)\left(u_{\rm i}(x;k_2)^2-\mathbb E[u_{\rm i}(x;k_2)^2]\right)\right]\\
&=:{\mathcal Y}_1+{\mathcal Y}_2+{\mathcal Y}_3+{\mathcal Y}_4.
\end{align*}

It is shown in \cite[Lemma 4.2]{CHL19} that for two real-valued random variables $X$ and $Z$ with $(X,Z)$ being a Gaussian random vector and $\mathbb E[X]=\mathbb E[Z]=0$, it holds
\[
\mathbb E[(X^2-\mathbb EX^2)(Z^2-\mathbb EZ^2)]=2(\mathbb E[XZ])^2.
\]
Note that, for any fixed $x\in U$ and $k>1$, $u_{\rm r}(x;k)$ and $u_{\rm i}(x;k)$ are both real-valued Gaussian random variables. Hence, we obtain 
\begin{align*}
{\mathcal Y}_1&=2k_1^{m+4}k_2^{m+4}(\mathbb E[u_{\rm r}(x;k_1)u_{\rm r}(x;k_2)])^2\\
&=\frac12k_1^{m+4}k_2^{m+4}\left(\Re\big[\mathbb E[u(x;k_1)u(x;k_2)]+\mathbb E[u(x;k_1)\overline{u(x;k_2)}]\big]\right)^2.
\end{align*}
The estimate of ${\mathcal Y}_1$ is given below for the two different cases: (i) $m>0$ and (ii) $m\le0$.
 
(i) For the case $m>0$, by choosing $M_1=m$ in Lemma \ref{lm:u3d}, we get
\begin{align}\label{eq:i}
{\mathcal Y}_1 &\lesssim k_1^{m+4}k_2^{m+4}\left(k_1^{-2}k_2^{-2}\left[(k_1+k_2)^{-m}(1+|k_1-k_2|)^{-m}+k_1^{-M_2}+k_2^{-M_2}\right]\right)^2\notag\\
&\lesssim (1+|k_1-k_2|)^{-2m}+k_1^{-2M_2+m}k_2^m+k_1^mk_2^{-2M_2+m},
\end{align}
where we use the fact
\[
k_1^mk_2^m(k_1+k_2)^{-2m}=\left(\frac{k_1k_2}{(k_1+k_2)^2}\right)^m\le 1.
\]
Note that
\[
{\mathcal Y}_{11}:=\frac1{T^2}\int_T^{2T}\int_T^{2T}(1+|k_1-k_2|)^{-2m}dk_1dk_2
=\frac2{T^2}\int_T^{2T}\int_{k_2}^{2T}(1+k_1-k_2)^{-2m}dk_1dk_2.
\]
If $m=\frac12$,
\[
{\mathcal Y}_{11}=\frac2{T^2}\int_T^{2T}\ln(1+2T-k_2)dk_2\le\frac2T\ln(1+2T).
\]
If $m=1$,
\[
{\mathcal Y}_{11}=\frac2T-\frac2{T^2}\ln(1+T).
\]
If $m\neq\frac12,1$,
\[
{\mathcal Y}_{11}=
\frac{2-2(1+T)^{2-2m}}{T^2(2-2m)(1-2m)}-\frac2{(1-2m)T}.
\]
The above estimates lead to
\begin{align}\label{eq:i1}
\lim_{T\to\infty}\frac1{T^2}\int_T^{2T}\int_T^{2T}(1+|k_1-k_2|)^{-2m}dk_1dk_2=0
\end{align}
for $m>0$. 
Moreover, by choosing $M_2>m$, we have 
\begin{align}\label{eq:i2}
\lim_{T\to\infty}\frac1{T^2}\int_T^{2T}\int_T^{2T}k_1^{-2M_2+m}k_2^mdk_1dk_2&=\lim_{T\to\infty}\frac1{T^2}\frac{(2T)^{-2M_2+m+1}-T^{-2M_2+m+1}}{-2M_2+m+1}\frac{(2T)^{m+1}-T^{m+1}}{m+1}\notag\\
&=\lim_{T\to\infty}\frac{(2^{-2M_2+m+1}-1)(2^{m+1}-1)}{(-2M_2+m+1)(m+1)}T^{-2M_2+2m}=0,
\end{align}
which, together with \eqref{eq:i} and \eqref{eq:i1}, leads to
\[
\lim_{T\to\infty}\frac1{T^2}\int_T^{2T}\int_T^{2T}{\mathcal Y}_1dk_1dk_2=0.
\]

(ii) For the case $m\le0$, an application of Lemma \ref{lm:u3d} yields 
\begin{align}\label{eq:ii}
{\mathcal Y}_1 &\lesssim k_1^{m+4}k_2^{m+4}\left(k_1^{-2}k_2^{-2}\left[(k_1+k_2)^{-m}(1+|k_1-k_2|)^{-M_1}+k_1^{-M_2}+k_2^{-M_2}\right]\right)^2\notag\\
&\lesssim k_1^mk_2^m(k_1+k_2)^{-2m}(1+|k_1-k_2|)^{-2M_1}+k_1^{-2M_2+m}k_2^m+k_1^mk_2^{-2M_2+m}
\end{align}
due to the fact
\[
(k_1+k_2)^{-M_1}(1+|k_1-k_2|)^{-m}\lesssim(1+|k_1-k_2|)^{-M_1}(k_1+k_2)^{-m}.
\]
It is easy to obtain
\begin{align}\label{eq:ii1}
\lim_{T\to\infty}\frac1{T^2}\int_T^{2T}\int_T^{2T}\left(k_1^{-2M_2+m}k_2^m+k_1^mk_2^{-2M_2+m}\right)dk_1dk_2=0
\end{align}
for any $M_2>0$ according to \eqref{eq:i2}. In addition,
\begin{align*}
&\frac1{T^2}\int_T^{2T}\int_T^{2T}k_1^mk_2^m(k_1+k_2)^{-2m}(1+|k_1-k_2|)^{-2M_1}dk_1dk_2\\
&\lesssim\left(\frac1{T^2}\int_T^{2T}\int_T^{2T}k_1^{2m}k_2^{2m}(k_1+k_2)^{-4m}dk_1dk_2\right)^{\frac12}\left(\frac1{T^2}\int_T^{2T}\int_T^{2T}(1+|k_1-k_2|)^{-4M_1}dk_1dk_2\right)^{\frac12}\\
&\to ~0\quad\text{as}\quad T\to\infty
\end{align*}
for any $M_1>0$ based on \eqref{eq:i1} and the fact
\begin{align*}
\frac1{T^2}\int_T^{2T}\int_T^{2T}k_1^{2m}k_2^{2m}(k_1+k_2)^{-4m}dk_1dk_2\lesssim\frac1{T^2}\int_T^{2T}\int_T^{2T}\left(k_1^{-2m}k_2^{2m}+k_1^{2m}k_2^{-2m}\right)dk_1dk_2\lesssim1.
\end{align*}
The above estimate together with \eqref{eq:ii} and \eqref{eq:ii1} also gives rise to
\[
\lim_{T\to\infty}\frac1{T^2}\int_T^{2T}\int_T^{2T}{\mathcal Y}_1dk_1dk_2=0.
\]

The terms ${\mathcal Y}_2$, ${\mathcal Y}_3$ and ${\mathcal Y}_4$ can be estimated similarly. The details are omitted for brevity. Combining these estimates yields \eqref{eq:Ycouple} and completes the proof.
\end{proof}

\subsection{The two-dimensional case}

For the case $d=2$ and $m\in(-4,2]$, we obtain from Assumption \ref{as:f} and \eqref{eq:Phi} that the distributional solution given in \eqref{eq:u} takes the form
\begin{align}\label{eq:u2d}
u(x;k)=-\frac{\rm i}{8k^2}\int_D\left(H_0^{(1)}(k|x-y|)-H_0^{(1)}({\rm i}k|x-y|)\right)f(y)dy.
\end{align}

To get the recovery formula of the strength $\mu$ for the random source $f$, we recall the following asymptotic expansion of the Hankel function on $\mathbb C$ (cf. \cite{AS92}):
\[
H_0^{(1)}(z)=\sum_{j=0}^\infty a_jz^{-(j+\frac12)}e^{{\rm i}z}\quad \text{as}\quad|z|\to\infty,
\]
where 
\[
a_0=\sqrt{\frac2{\pi}}e^{-\frac{{\rm i}\pi}4},\quad 
a_j=\sqrt{\frac2{\pi}}\left(\frac{\rm i}8\right)^j\left(\prod_{l=1}^j(2l-1)^2/j!\right)e^{-\frac{{\rm i}\pi}4},\quad j\ge1.
\]
Define the truncated functions
\begin{align*}
H_{0,N}^{(1)}(z):&=\sum_{j=0}^Na_jz^{-(j+\frac12)}e^{{\rm i}z},\\
\Phi_N(x,y,k):&=\frac{\rm i}{8k^2}\left(H_{0,N}^{(1)}(k|x-y|)-H_{0,N}^{(1)}({\rm i}k|x-y|)\right).
\end{align*}
For any $N\in\mathbb N$, a simple calculation yields 
\begin{align}\label{eq:Phierror}
\Phi(x,y,k)-\Phi_N(x,y,k)&=\frac{\rm i}{8k^2}\sum_{j=N+1}^\infty\left(a_j(k|x-y|)^{-(j+\frac12)}e^{{\rm i}k|x-y|}-a_j({\rm i}k|x-y|)^{-(j+\frac12)}e^{-k|x-y|}\right)\notag\\
&=O\left(\frac1{k^2(k|x-y|)^{N+\frac32}}\right)\quad\text{as}\quad k|x-y|\to\infty.
\end{align}
Based on the truncated fundamental solution $\Phi_N$ with $N=3$, we consider the truncated solution 
\begin{align*}
u_3(x;k):&=-\int_D\Phi_{3}(x,y,k)f(y)dy\\
&=-\frac{\rm i}{8k^2}\int_D\left(H_{0,3}^{(1)}(k|x-y|)-H_{0,3}^{(1)}({\rm i}k|x-y|)\right)f(y)dy\\
&=-\frac{\rm i}{8k^2}\sum_{j=0}^3a_j\int_D\left((k|x-y|)^{-(j+\frac12)}e^{{\rm i}k|x-y|}-({\rm i}k|x-y|)^{-(j+\frac12)}e^{-k|x-y|}\right)f(y)dy.
\end{align*}

\begin{lemma}\label{lm:u2error}
Let $f$ satisfy Assumption \ref{as:f} with $d=2$. For $k\gg1$ and $x\in U$, the error between the solution $u$ and the truncated solution $u_3$ satisfies
\begin{equation*}
|u(x;k)-u_3(x;k)|\lesssim \begin{cases}
k^{-\frac72},\quad &m\in(-4,0],\\
k^{-\frac{11}2},\quad &m\in(0,2],
\end{cases}
\end{equation*}
almost surely.
\end{lemma}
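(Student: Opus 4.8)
The plan is to realize the truncation error as a distributional pairing of the rough source against the smooth remainder kernel, and then to bound that pairing by Sobolev duality, reading off the power of $k$ from the asymptotic expansion \eqref{eq:Phierror}. Setting $g_k(x,y):=\Phi_3(x,y,k)-\Phi(x,y,k)$, the definitions of $u$ and $u_3$ give
\[
u(x;k)-u_3(x;k)=\int_D g_k(x,y)f(y)\,dy=\langle f,g_k(x,\cdot)\rangle .
\]
By Lemma \ref{lm:f}(ii) with $d=2$ and $p=2$, almost every realization of $f$ lies in $H^{\alpha}(D)$ with $\alpha:=\frac{m-2}2-\epsilon$ for every $\epsilon>0$, and $\|f\|_{H^{\alpha}(D)}$ is an almost surely finite random variable. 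Since $g_k(x,\cdot)$ is smooth on $D$ (see below), the dual product is controlled by
\[
|u(x;k)-u_3(x;k)|\le \|f\|_{H^{\alpha}(D)}\,\|g_k(x,\cdot)\|_{H^{s'}(D)},\qquad s':=-\alpha=1-\frac m2+\epsilon .
\]
The random factor $\|f\|_{H^{\alpha}(D)}$ is absorbed into the almost-sure constant, so everything reduces to a deterministic estimate of $\|g_k(x,\cdot)\|_{H^{s'}(D)}$ with the sharp power of $k$, uniformly in $x\in U$.

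The crucial observation, and the main technical step, is that $g_k(x,\cdot)$ is smooth on $D$ and that its Sobolev norms gain exactly one power of $k$ per derivative. Indeed $\mathrm{dist}(D,U)=r_0>0$ forces $|x-y|\ge r_0$ for all $x\in U$ and $y\in D$, so the remainder \eqref{eq:Phierror} never sees the diagonal singularity. Its leading contribution is the $j=4$ term, of size $\frac{{\rm i}a_4}{8}\,k^{-13/2}|x-y|^{-9/2}e^{{\rm i}k|x-y|}$, together with an exponentially small modified-Helmholtz piece carrying $e^{-k|x-y|}\le e^{-kr_0}$; hence $|g_k(x,y)|\lesssim k^{-13/2}$. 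When differentiating in $y$, each derivative either hits the oscillatory factor $e^{{\rm i}k|x-y|}$ and produces a factor $O(k)$ (since $|\nabla_y|x-y||\le1$), or it hits the smooth, $k$-independent amplitude $|x-y|^{-9/2}$, which is harmless on $D$. Thus $|\partial_y^{\beta}g_k(x,y)|\lesssim k^{|\beta|-13/2}$ for every multi-index $\beta$, and integrating over the bounded domain $D$ gives $\|g_k(x,\cdot)\|_{H^{\ell}(D)}\lesssim k^{\ell-13/2}$ for every nonnegative integer $\ell$. Interpolating between consecutive integer orders then yields
\[
\|g_k(x,\cdot)\|_{H^{s'}(D)}\lesssim k^{s'-13/2}\qquad\text{for all real }s'\ge0,
\]
uniformly in $x\in U$.

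Finally I match the exponent to the two ranges of $m$ through a suitable choice of $\epsilon$. For $m\in(-4,0]$ one has $1-\frac m2\in[1,3)$, so fixing $\epsilon$ with $0<\epsilon<2+\frac m2$ gives $s'=1-\frac m2+\epsilon<3$, whence $\|g_k(x,\cdot)\|_{H^{s'}(D)}\lesssim k^{s'-13/2}\le k^{-7/2}$ for $k\ge1$. For $m\in(0,2]$ one has $1-\frac m2\in[0,1)$, so fixing $\epsilon$ with $0<\epsilon<\frac m2$ gives $s'<1$ and hence $\|g_k(x,\cdot)\|_{H^{s'}(D)}\lesssim k^{s'-13/2}\le k^{-11/2}$. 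Combined with the duality bound and the almost-sure finiteness of $\|f\|_{H^{\alpha}(D)}$, this delivers both cases of the lemma. The main obstacle is the sharp bookkeeping in the middle step: one must confirm that powers of $k$ arise only from the oscillatory exponential while the amplitude stays $k$-independent, and that the fractional smoothness $s'$ can be driven just below the integer thresholds $3$ and $1$ using the freedom in $\epsilon$; the remaining estimates are routine.
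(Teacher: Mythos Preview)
Your proof is correct and follows essentially the same approach as the paper: represent $u-u_3$ as the pairing of $f$ against the smooth remainder $\Phi-\Phi_3$, invoke Lemma~\ref{lm:f}(ii) for the Sobolev regularity of $f$, and read off the $k$-decay from the derivative bounds $|\partial_y^\beta(\Phi-\Phi_3)|\lesssim k^{|\beta|-13/2}$. The only cosmetic difference is that the paper works directly with integer-order $W^{s,p}$--$W^{-s,q}$ duality (taking $s=3$ for $m\in(-4,0]$ and $s=1$ for $m\in(0,2]$), whereas you use $L^2$-based fractional spaces and interpolation to land just below those integer thresholds; the interpolation step is sound but not strictly needed.
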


\begin{proof}
According to \eqref{eq:Phierror}, for $y\in D$ and $x\in U$  with dist$(D,U)=r_0>0$, we get
\begin{align*}
|\Phi(x,y,k)-\Phi_3(x,y,k)|&=O\left(k^{-\frac{13}2}|x-y|^{-\frac92}\right),\\
|\partial_{y_i}\Phi(x,y,k)-\partial_{y_i}\Phi_3(x,y,k)|&=O\left(k^{-\frac{11}2}|x-y|^{-\frac92}\right),\\
|\partial^2_{y_iy_j}\Phi(x,y,k)-\partial^2_{y_iy_j}\Phi_3(x,y,k)|&=O\left(k^{-\frac92}|x-y|^{-\frac92}\right),\\
|\partial^3_{y_iy_jy_l}\Phi(x,y,k)-\partial^3_{y_iy_jy_l}\Phi_3(x,y,k)|&=O\left(k^{-\frac72}|x-y|^{-\frac92}\right).
\end{align*}

If $m\in(-4,0]$, then $f\in W^{\frac{m-d}2-\epsilon,p}(D)\subset W^{-3,p}(D)$ for any $p>1$ according to Lemma \ref{lm:f}. We then get
\[
|u(x;k)-u_3(x;k)|\le\|\Phi(x,\cdot,k)-\Phi_3(x,\cdot,k)\|_{W^{3,q}(D)}\|f\|_{W^{-3,p}(D)}\lesssim k^{-\frac72}
\]
with $q$ satisfying $\frac1p+\frac1q=1$.

If $m\in(0,2]$, then $f\in W^{\frac{m-d}2-\epsilon,p}(D)\subset W^{-1,p}(D)$ for any $p>1$, and hence
\[
|u(x;k)-u_3(x;k)|\le\|\Phi(x,\cdot,k)-\Phi_3(x,\cdot,k)\|_{W^{1,q}(D)}\|f\|_{W^{-1,p}(D)}\lesssim k^{-\frac{11}2},
\] 
which completes the proof.
\end{proof}

Similar to the three-dimensional case, to get the recovery formula in the almost surely sense, we need to show the asymptotical independence of the truncated solution $u_3$, which is stated in the following lemma. 

\begin{lemma}\label{lm:u2d}
Let $f$ satisfy Assumption \ref{as:f}. For $k_1,k_2\ge1$, it holds uniformly for $x\in U$ that
\begin{align}\label{eq:u2couple1}
\big|\mathbb E\big[u_3(x;k_1)\overline{u_3(x;k_2)}\big]\big|&\lesssim k_1^{-\frac52}k_2^{-\frac52}\left[(k_1+k_2)^{-m}(1+|k_1-k_2|)^{-M_1}+k_1^{-M_2}+k_2^{-M_2}\right],\\\label{eq:u2couple2}
\big|\mathbb E\big[u_3(x;k_1)u_3(x;k_2)\big]\big|&\lesssim k_1^{-\frac52}k_2^{-\frac52}\left[(k_1+k_2)^{-M_1}(1+|k_1-k_2|)^{-m}+k_1^{-M_2}+k_2^{-M_2}\right],
\end{align}
where $M_1,M_2>0$ are arbitrary integers. 
In particular, if $k_1=k_2=k$, then
\begin{align}\label{eq:u2moment}
\mathbb E|u_3(x;k)|^2=\left[\frac1{32\pi}\int_{D}\frac1{|x-\zeta|}\mu(\zeta)d\zeta\right] k^{-m-5}+O(k^{-m-6}).
\end{align} 

\end{lemma}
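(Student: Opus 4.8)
The plan is to follow the template of the three-dimensional Lemma \ref{lm:u3d}, with the asymptotic expansion of the Hankel function playing the role that the explicit exponential kernel played there. First I would substitute the definition of $u_3$ and expand the product $u_3(x;k_1)\overline{u_3(x;k_2)}$ (respectively $u_3(x;k_1)u_3(x;k_2)$) into a double sum over the expansion indices $j_1,j_2\in\{0,1,2,3\}$. Within each pair $(j_1,j_2)$ the amplitude splits into an oscillating part carrying $e^{{\rm i}k|x-y|}$ and an exponentially decaying part carrying $e^{-k|x-y|}$, so each summand breaks further into four pieces: oscillating--oscillating, oscillating--decaying, decaying--oscillating, and decaying--decaying. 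Taking the expectation replaces $f(y)f(z)$ by the kernel $K_f(y,z)$, and the $j$th amplitude contributes an extra factor $k_1^{-(j_1+1/2)}k_2^{-(j_2+1/2)}$ against the common prefactor $(8k_1^2)^{-1}(8k_2^2)^{-1}$.

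The \textbf{main term} is the oscillating--oscillating piece with $j_1=j_2=0$, which reduces to the oscillatory double integral
\[
\frac{|a_0|^2}{64\,k_1^{5/2}k_2^{5/2}}\int_D\int_D\frac{e^{{\rm i}(k_1|x-y|-k_2|x-z|)}}{|x-y|^{1/2}|x-z|^{1/2}}K_f(y,z)\,dy\,dz.
\]
Since $\mathrm{dist}(D,U)=r_0>0$, the amplitude $|x-y|^{-1/2}|x-z|^{-1/2}$ is smooth in $(y,z)\in D\times D$ for each fixed $x\in U$, so this is exactly of the form treated in \cite[Lemma A.1]{LLW} (the same estimate controlling the term $\mathrm I_1$ in the proof of Lemma \ref{lm:u3d}), only with a different smooth amplitude. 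That lemma supplies the decay $(k_1+k_2)^{-m}(1+|k_1-k_2|)^{-M_1}$ and, on the diagonal $k_1=k_2=k$, the leading coefficient $\int_D a(x,\zeta,\zeta)\mu(\zeta)\,d\zeta\cdot k^{-m}$ with diagonal amplitude $a(x,\zeta,\zeta)=|x-\zeta|^{-1}$. Together with the prefactor $|a_0|^2/64=1/(32\pi)$ this yields precisely \eqref{eq:u2moment}. The terms with $j_1+j_2\ge1$ carry extra powers $k_1^{-j_1}k_2^{-j_2}$, so they are subsumed in the $O(k^{-m-6})$ remainder of \eqref{eq:u2moment} and respect the bounds \eqref{eq:u2couple1}--\eqref{eq:u2couple2}.

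For every piece containing at least one decaying factor $e^{-k_i|x-\cdot|}$, I would use $|x-\cdot|\ge r_0>0$ to bound $e^{-k_i|x-\cdot|}\le C_{M_2}k_i^{-M_2}$ for any integer $M_2>0$, which produces the $k_1^{-M_2}+k_2^{-M_2}$ contributions. Controlling the remaining double integral against the rough kernel $K_f$ then proceeds exactly as for $\mathrm I_2$ and $\mathrm I_3$ in the proof of Lemma \ref{lm:u3d}: one invokes Lemma \ref{lm:Kf} together with Remark \ref{rk:Kf} and, in the genuinely distributional cases, the Pizzetti formula, running through the subcases $m\in(0,2]$, $m=0$, $m\in(-2,0)$, $m=-2$, and $m\in(-4,-2)$ dictated by Assumption \ref{as:f} for $d=2$. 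The estimate \eqref{eq:u2couple2} for the unconjugated moment follows by the identical scheme, the only change being that the phase becomes $k_1|x-y|+k_2|x-z|$, so the roles of $k_1+k_2$ and $1+|k_1-k_2|$ are interchanged in the oscillatory estimate.

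The \textbf{main obstacle} is one of bookkeeping rather than concept: managing the sixteen-fold expansion over $(j_1,j_2)$ together with the osc/dec splitting while tracking the powers of $k_1,k_2$, and verifying that the cited oscillatory-integral estimate indeed applies with the half-integer amplitude $|x-y|^{-1/2}$, which is smooth, hence admissible, only because $r_0>0$, so that the correct diagonal coefficient $|x-\zeta|^{-1}$ is extracted. The case analysis of $K_f$ across the full range $m\in(-4,2]$ is routine but lengthy, and most of it can be cited verbatim from the three-dimensional argument.
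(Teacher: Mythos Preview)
Your proposal is correct and follows essentially the same route as the paper: expand $u_3$ via the truncated Hankel asymptotics, split into the oscillating--oscillating block (handled by \cite[Lemma A.1]{LLW}, yielding both the decay in \eqref{eq:u2couple1} and the leading term $\frac{|a_0|^2}{64}k^{-m-5}\int_D|x-\zeta|^{-1}\mu(\zeta)\,d\zeta$ of \eqref{eq:u2moment}) and the blocks containing at least one factor $e^{-k_i|x-\cdot|}$ (controlled exactly as $\mathrm I_2,\mathrm I_3$ in Lemma \ref{lm:u3d}). The paper additionally points to \cite[Corollary 5.4]{LLW} for the unconjugated estimate \eqref{eq:u2couple2}, but your phase-swap remark captures the same idea.
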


\begin{proof}
The truncated solution $u_3$ at two different frequencies $k_1$ and $k_2$ satisfies
\begin{align*}
&\mathbb E\big[u_3(x;k_1)\overline{u_3(x;k_2)}\big]\\
&=\frac1{64k_1^2k_2^2}\sum_{j,l=0}^3a_j\overline{a_l}\int_D\int_D\left((k_1|x-y|)^{-(j+\frac12)}e^{{\rm i}k_1|x-y|}-({\rm i}k_1|x-y|)^{-(j+\frac12)}e^{-k_1|x-y|}\right)\\
&\quad\times\left((k_2|x-z|)^{-(l+\frac12)}e^{-{\rm i}k_2|x-z|}-(-{\rm i}k_2|x-z|)^{-(l+\frac12)}e^{-k_2|x-z|}\right)\mathbb E[f(y)f(z)]dydz\\
&=\frac1{64k_1^2k_2^2}\sum_{j,l=0}^3\frac{a_j\overline{a_l}}{k_1^{j+\frac12}k_2^{l+\frac12}}\int_D\int_D\frac{e^{{\rm i}(k_1|x-y|-k_2|x-z|)}}{|x-y|^{j+\frac12}|x-z|^{l+\frac12}}\mathbb E[f(y)f(z)]dydz\\
&\quad -\frac1{64k_1^2k_2^2}\sum_{j,l=0}^3\frac{a_j\overline{a_l}}{k_1^{j+\frac12}(-{\rm i}k_2)^{l+\frac12}}\int_D\int_D\frac{e^{{\rm i}k_1|x-y|-k_2|x-z|}}{|x-y|^{j+\frac12}|x-z|^{l+\frac12}}\mathbb E[f(y)f(z)]dydz\\
&\quad -\frac1{64k_1^2k_2^2}\sum_{j,l=0}^3\frac{a_j\overline{a_l}}{({\rm i}k_1)^{j+\frac12}k_2^{l+\frac12}}\int_D\int_D\frac{e^{-k_1|x-y|-{\rm i}k_2|x-z|}}{|x-y|^{j+\frac12}|x-z|^{l+\frac12}}\mathbb E[f(y)f(z)]dydz\\
&\quad +\frac1{64k_1^2k_2^2}\sum_{j,l=0}^3\frac{a_j\overline{a_l}}{({\rm i}k_1)^{j+\frac12}(-{\rm i}k_2)^{l+\frac12}}\int_D\int_D\frac{e^{-k_1|x-y|-k_2|x-z|}}{|x-y|^{j+\frac12}|x-z|^{l+\frac12}}\mathbb E[f(y)f(z)]dydz\\
&=:{\rm J}_1(x;k_1,k_2)+{\rm J}_2(x;k_1,k_2)+{\rm J}_3(x;k_1,k_2)+{\rm J}_4(x;k_1,k_2).
\end{align*}

For the term ${\rm J}_1$, we have from \cite[Lemma A.1]{LLW} that 
\[
|{\rm J}_1(x;k_1,k_2)|\lesssim k_1^{-\frac52}k_2^{-\frac52}(k_1+k_2)^{-m}(1+|k_1-k_2|)^{-M_1}
\]
and 
\begin{align*}
{\rm J}_1(x;k,k)&=\frac{|a_0|^2}{64k^5}\int_D\int_D\frac{e^{{\rm i}(k|x-y|-k|x-z|)}}{|x-y|^{\frac12}|x-z|^{\frac12}}\mathbb E[f(y)f(z)]dydz\\
&\quad +\frac1{64k^4}\sum_{\substack{j,l=0\\j\,\text{or}\,l\neq0}}^3\frac{a_j\overline{a_l}}{k^{j+l+1}}\int_D\int_D\frac{e^{{\rm i}(k|x-y|-k|x-z|)}}{|x-y|^{j+\frac12}|x-z|^{l+\frac12}}\mathbb E[f(y)f(z)]dydz\\
&=\frac1{32\pi}\left[\int_D\frac1{|x-\zeta|}\mu(\zeta)d\zeta\right]k^{-m-5}+O(k^{-m-6}),
\end{align*}
where $M_1>0$ is an arbitrary integer. 

Similar to the three-dimensional case, the other three terms can be estimated by taking advantage of the exponential decay of the integrants. We then obtain 
\[
|{\rm J}_2(x;k_1,k_2)+{\rm J}_3(x;k_1,k_2)+{\rm J}_4(x;k_1,k_2)|\lesssim k_1^{-\frac52}k_2^{-\frac52}\left(k_1^{-M_2}+k_2^{-M_2}\right)
\]
for any $M_2>0$ and
\[
{\rm J}_2(x;k,k)+{\rm J}_3(x;k,k)+{\rm J}_4(x;k,k)=O(k^{-m-6})
\]
by choosing $M_2>m+1$.

The estimates above lead to \eqref{eq:u2couple1} and \eqref{eq:u2moment}. The proof of \eqref{eq:u2couple2} is to combine a similar proof of \eqref{eq:u2couple1} and \cite[Corollary 5.4]{LLW}.
\end{proof}

Based on the estimates for the truncated solution $u_3$, the  unique recovery of the strength can be obtained by a single realization of the wave field $u$ in the almost surely sense, which is stated in the following theorem.

\begin{theorem}\label{tm:u2d}
Let $f$ satisfy Assumption \ref{as:f}. For any $x\in U$, it holds almost surely that
\begin{align}\label{eq:u2dlimit}
\lim_{T\to\infty}\frac1T\int_T^{2T}k^{m+5}|u(x;k)|^2dk=\frac1{32\pi}\int_D\frac1{|x-\zeta|}\mu(\zeta)d\zeta=:T_2(x),
\end{align}
and the strength $\mu$ can be uniquely determined by the measurement $\{T_2(x)\}_{x\in U}$.
\end{theorem}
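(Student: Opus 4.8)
The plan is to reduce Theorem~\ref{tm:u2d} to the three-dimensional argument by exploiting the truncated solution $u_3$ and the two estimates already established in Lemmas~\ref{lm:u2error} and \ref{lm:u2d}. First I would observe that, exactly as in the proof of Theorem~\ref{tm:u3d}, the deterministic recovery of $\mu$ from $\{T_2(x)\}_{x\in U}$ follows by solving the deconvolution problem (cf.\ \cite[Theorem~1]{LPS08}), since $T_2$ is a smooth function of $x\in U$ given by convolution of $\mu$ against the kernel $|x-\zeta|^{-1}$. So the substance is the almost-sure limit \eqref{eq:u2dlimit}.

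The main strategy is a two-step splitting: replace $u$ by $u_3$, control the error, and then run the ergodic-type argument on $u_3$. Concretely, I would write
\[
k^{m+5}|u(x;k)|^2 = k^{m+5}|u_3(x;k)|^2 + k^{m+5}\left(|u(x;k)|^2-|u_3(x;k)|^2\right),
\]
and show the second piece integrates to zero. Using $|u|^2-|u_3|^2 = (|u|-|u_3|)(|u|+|u_3|)$ together with $||u|-|u_3||\le|u-u_3|$, the error term is bounded by $|u-u_3|(|u|+|u_3|)$. From Lemma~\ref{lm:u2error} we have $|u-u_3|\lesssim k^{-7/2}$ (the worse of the two cases), while $\mathbb E|u_3|^2\lesssim k^{-m-5}$ from \eqref{eq:u2moment} gives $|u_3|=O(k^{-(m+5)/2})$ in an averaged sense and $|u|$ is comparable; multiplying by $k^{m+5}$ yields a factor $k^{m+5}\cdot k^{-7/2}\cdot k^{-(m+5)/2} = k^{(m+1)/2 - 7/2+ \ldots}$ which I would check decays fast enough after averaging over $[T,2T]$ so that $\frac1T\int_T^{2T}$ of this cross term tends to zero almost surely. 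This step must be done carefully since it mixes a deterministic pointwise bound with random magnitudes, so I would invoke the moment bounds and a Cauchy--Schwarz in $k$ to pass the limit.

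For the leading piece, I would mimic the proof of Theorem~\ref{tm:u3d} verbatim with $u$ replaced by $u_3$ and exponent $m+5$ in place of $m+4$. By \eqref{eq:u2moment},
\[
\lim_{T\to\infty}\frac1T\int_T^{2T}k^{m+5}\,\mathbb E|u_3(x;k)|^2\,dk=\frac1{32\pi}\int_D\frac1{|x-\zeta|}\mu(\zeta)d\zeta,
\]
so it remains to prove the variance-type statement
\[
\lim_{T\to\infty}\frac1{T^2}\int_T^{2T}\int_T^{2T}\mathbb E\big[\tilde Y(x;k_1)\tilde Y(x;k_2)\big]\,dk_1dk_2=0,
\]
where $\tilde Y(x;k):=k^{m+5}(|u_3(x;k)|^2-\mathbb E|u_3(x;k)|^2)$. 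Splitting $u_3$ into real and imaginary parts, applying the Gaussian fourth-moment identity $\mathbb E[(X^2-\mathbb EX^2)(Z^2-\mathbb EZ^2)]=2(\mathbb E[XZ])^2$, and inserting the coupling bounds \eqref{eq:u2couple1}--\eqref{eq:u2couple2} reduces everything to integrals of the form $\frac1{T^2}\int\int(1+|k_1-k_2|)^{-2M_1}$ and $\frac1{T^2}\int\int k_1^{-2M_2+m}k_2^{m}$, which are exactly the integrals already shown to vanish in the proof of Theorem~\ref{tm:u3d} (the decay prefactor $k_1^{-5/2}k_2^{-5/2}$ here pairs with $k^{m+5}$ in the same balanced way that $k_1^{-2}k_2^{-2}$ paired with $k^{m+4}$ there). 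The hard part will be the error-term step: ensuring that the truncation error, though only $O(k^{-7/2})$ pointwise, does not spoil the almost-sure convergence after weighting by the growing factor $k^{m+5}$ and averaging, which requires combining Lemma~\ref{lm:u2error} with the second-moment control on $u_3$ rather than a crude pointwise bound on $u$.
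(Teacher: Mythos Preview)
Your proposal follows essentially the same route as the paper: reduce to $u_3$, run the ergodic argument of Theorem~\ref{tm:u3d} on $u_3$ using Lemma~\ref{lm:u2d}, and control the truncation error via Lemma~\ref{lm:u2error} and Cauchy--Schwarz in $k$. The paper uses the algebraic identity $|u|^2=|u_3|^2+|u-u_3|^2+2\Re[\overline{u_3}(u-u_3)]$ rather than your factorization $(|u|-|u_3|)(|u|+|u_3|)$, which is slightly cleaner because it avoids having to bound $|u|$ separately, but the two are equivalent after one more triangle inequality.

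There is one genuine gap in your outline: you cannot use the ``worse'' bound $|u-u_3|\lesssim k^{-7/2}$ uniformly across $m\in(-4,2]$. With that bound, the key quantity $\frac1T\int_T^{2T}k^{m+5}|u-u_3|^2\,dk$ is of order $\frac1T\int_T^{2T}k^{m-2}\,dk$, which for $m=2$ equals $1$ and does \emph{not} tend to zero; the Cauchy--Schwarz step then fails at the endpoint. The paper handles this by splitting into the two cases of Lemma~\ref{lm:u2error}: for $m\in(-4,0]$ one gets $k^{m-2}$ with exponent $\le -2$, while for $m\in(0,2]$ the sharper bound $|u-u_3|\lesssim k^{-11/2}$ yields $k^{m-6}$ with exponent $\le -4$. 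You must retain both cases rather than collapsing to the weaker one. Once that is fixed, your heuristic exponent count and the rest of the argument go through exactly as in the paper.
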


\begin{proof}
Using \eqref{eq:u2moment} in Lemma \ref{lm:u2d}, we get for $x\in U$ that 
\begin{align}\label{eq:u2ex}
\lim_{T\to\infty}\frac1T\int_T^{2T}k^{m+5}\mathbb E|u_3(x;k)|^2dk=\frac1{32\pi}\int_D\frac1{|x-\zeta|}\mu(\zeta)d\zeta.
\end{align}
First we show that 
\begin{align}\label{eq:u2as}
\lim_{T\to\infty}\frac1T\int_T^{2T}k^{m+5}|u_3(x;k)|^2dk=\frac1{32\pi}\int_D\frac1{|x-\zeta|}\mu(\zeta)d\zeta
\end{align}
in the almost surely sense. In fact, following the same procedure as the proof of \eqref{eq:error} in Theorem \ref{tm:u3d} and utilizing Lemma \ref{lm:u2d}, we have almost surely that
\[
\lim_{T\to\infty}\frac1T\int_T^{2T}k^{m+5}\left(|u_3(x;k)|^2-\mathbb E|u_3(x;k)|^2\right)dk=0,
\]
which, together with \eqref{eq:u2ex}, leads to \eqref{eq:u2as}.

Note that
\begin{align*}
\frac1T\int_T^{2T}k^{m+5}|u(x;k)|^2dk &=\frac1T\int_T^{2T}k^{m+5}|u_3(x;k)|^2dk\\
&\quad +\frac1T\int_T^{2T}k^{m+5}|u(x;k)-u_3(x;k)|^2dk\\
&\quad +\frac2T\int_T^{2T}k^{m+5}\Re\left[\overline{u_3(x;k)}(u(x;k)-u_3(x;k))\right]dk,
\end{align*}
where 
\begin{align*}
&\frac2T\int_T^{2T}k^{m+5}\Re\left[\overline{u_3(x;k)}(u(x;k)-u_3(x;k))\right]dk\\
&\lesssim\left[\frac1T\int_T^{2T}k^{m+5}|u_3(x;k)|^2dk\right]^{\frac12}\left[\frac1T\int_T^{2T}k^{m+5}|u(x;k)-u_3(x;k)|^2dk\right]^{\frac12}.
\end{align*}
As a result, to prove \eqref{eq:u2dlimit}, it suffices to show
\[
\lim_{T\to\infty}\frac1T\int_T^{2T}k^{m+5}|u(x;k)-u_3(x;k)|^2dk=0.
\]

For the case $m\in(-4,0]$, according to Lemma \ref{lm:u2error}, it holds
\begin{align*}
\frac1T\int_T^{2T}k^{m+5}|u(x;k)-u_3(x;k)|^2dk\lesssim\frac1T\int_T^{2T}k^{m-2}dk\to 0\quad\text{as}~T\to\infty.
\end{align*}

For the case $m\in(0,2]$, an application of Lemma \ref{lm:u2error} leads to
\begin{align*}
\frac1T\int_T^{2T}k^{m+5}|u(x;k)-u_3(x;k)|^2dk\lesssim\frac1T\int_T^{2T}k^{m-6}dk\to 0\quad\text{as}~T\to\infty,
\end{align*}
which completes the proof.
\end{proof}

\section{Numerical experiments}
\label{sec:num}

In this section, we present some numerical experiments to demonstrate the validity and effectiveness of the proposed method. 
Specifically, we consider the case $d=2$ and $m=0$, i.e., the random source is generated by the white noise in the form 
\[
f=\sqrt{\mu}\dot{W},
\]
where $\mu\in C_0^\infty(D)$ and $\mu\ge0$.

\subsection{The reconstruction formula}\label{sec:5.1}

When the random source is taken as the white noise model, both the covariance operator and its symbol have simpler forms. Based on the It\^o isometry, the covariance operator $\mathcal Q_f$ is given explicitly by
\[
\langle\mathcal Q_f\varphi,\psi\rangle=\mathbb E[\langle f,\varphi\rangle\langle f,\psi\rangle]=\mathbb E\left[\int_D\varphi(x)\sqrt{\mu(x)}dW(x)\int_D\psi(y)\sqrt{\mu(y)}dW(y)\right]=\langle\mu\varphi,\psi\rangle
\]
for any $\varphi,\psi\in\mathcal D$, which implies
\[
(\mathcal Q_f\varphi)(x)=\mu(x)\varphi(x)=\frac1{(2\pi)^2}\int_{\mathbb R^2}e^{{\rm i}x\cdot\xi}\mu(x)\hat\varphi(\xi)d\xi.
\]
Hence, the symbol $\sigma(x,\xi)$ of the pseudo-differential operator $\mathcal Q_f$ has only one term $\sigma(x,\xi)=\mu(x)$ with $\mu$ being the strength of the source $f$. 

Consequently, when using the second moment of $u$ to recover the strength $\mu$, the wave number $k$ is not required to be sufficiently large for the white noise case. More precisely, according to the expression of the solution given in \eqref{eq:u2d}, we get 
\begin{align}\label{eq:u_exa}
u(x;k)=-\frac{\rm i}{8k^2}\int_D\left(H_0^{(1)}(k|x-y|)-H_0^{(1)}({\rm i}k|x-y|)\right)\sqrt{\mu(y)}dW(y),
\end{align}
which leads to
\begin{align}\label{eq:data1}
64k^4\mathbb E|u(x;k)|^2=&\int_D\left|H_0^{(1)}(k|x-y|)-H_0^{(1)}({\rm i}k|x-y|)\right|^2\mu(y)dy.
\end{align}
Noting that the function $|H_0^{(1)}(k|x-y|)-H_0^{(1)}({\rm i}k|x-y|)|^2$ involved in the above integral has no singularity, we get theoretically that the strength $\mu$ can be uniquely determined through \eqref{eq:data1} at a single frequency.
However, similar to the inverse random source problem for elastic waves studied in \cite{BCL17},  the numerical solution is rather unstable if one uses the numerical integration of \eqref{eq:data1} directly to recover the strength $\mu$ due to the fast decay of its singular values. To handle the instability, a modified integral equation and regularization technique are required to get a more stable and accurate result.

Note that $H_0^{(1)}=J_0+{\rm i}Y_0$ with $J_0$ and $Y_0$ being the real-valued Bessel functions of the first kind and the second kind, respectively, and the function
\[
{\rm i}H_0^{(1)}({\rm i}k|x-y|)=\frac2{\pi}K_0(k|x-y|)
\]
obtained by \eqref{eq:Mac} with $d=2$ is also real-valued.
We then split the solution $u$ into its real and imaginary parts as follows:
\begin{align*}
\Re[u(x;k)]&=\frac1{8k^2}\int_D\left(Y_0(k|x-y|)+{\rm i}H_0^{(1)}({\rm i}k|x-y|)\right)\sqrt{\mu(y)}dW(y),\\
\Im[u(x;k)]&=-\frac1{8k^2}\int_DJ_0(k|x-y|)\sqrt{\mu(y)}dW(y),
\end{align*}
and use the modified integral equation
\begin{align}\label{eq:data2}
&64k^4\mathbb E\left[(\Re[u(x;k)])^2-(\Im[u(x;k)])^2\right]\notag\\
&=\int_D\left[\left(Y_0(k|x-y|)+{\rm i}H_0^{(1)}({\rm i}k|x-y|)\right)^2-\left(J_0(k|x-y|)\right)^2\right]\mu(y)dy\notag\\
&=:\int_DG(x-y)\mu(y)dy
\end{align}
to reconstruct the strength $\mu$.

\subsection{The synthetic data}

The direct problem is solved numerically to generate the synthetic data. In the experiments, we choose a square domain $D:=[-1,1]\times[-1,1]$ for the support and the measurement domain $U$, which is specified in the next subsection, such that dist$(D,U)>0$.
For square domains $D$ and $U$, we define two index sets
\begin{align*}
\mathcal T_U:&=\{{\boldsymbol i}=(i_1,i_2): i_l=0,\cdots,N_U,~l=1,2\},\\
\mathcal T_D:&=\{{\boldsymbol j}=(j_1,j_2): j_l=0,\cdots,N_D,~l=1,2\}
\end{align*}
with $N_U=40$ and $N_D=20$, and define two sets of discrete points 
\begin{align*}
\{x_{\boldsymbol i}\}_{\boldsymbol i\in\mathcal T_U}:&=\left\{x_{\boldsymbol i}=(x^{(1)}_{i_1},x^{(2)}_{i_2})^\top\in U: x_{\boldsymbol i}=x_{(0,0)}+(i_1\delta x,i_2\delta x)^\top\right\}_{\boldsymbol i\in\mathcal T_U},\\
\{y_{\boldsymbol j}\}_{\boldsymbol j\in\mathcal T_D}:&=\left\{y_{\boldsymbol j}=(y^{(1)}_{j_1},y^{(2)}_{j_2})^\top\in D: y_{\boldsymbol j}=y_{(0,0)}+(j_1\delta y,j_2\delta y)^\top\right\}_{\boldsymbol i\in\mathcal T_D},
\end{align*}
where $\delta x=1/N_U$ and $\delta y=1/N_D$.  The synthetic data is generated at the discrete points $\{x_{\boldsymbol i}\}_{\boldsymbol i\in\mathcal T_U}$, and the solution $u(x_{\boldsymbol i};k)$ is approximated  through the numerical quadrature of the It\^o integral \eqref{eq:u_exa} by
\[
u(x_{\boldsymbol i};k)\approx u_{\rm num}(x_{\boldsymbol i},\omega,k):=\frac1{8{\rm i}k^2}\sum_{\boldsymbol j\in\mathcal T_D}\left(H_0^{(1)}(k|x_{\boldsymbol i}-y_{\boldsymbol j}|)-H_0^{(1)}({\rm i}k|x_{\boldsymbol i}-y_{\boldsymbol j}|)\right)\sqrt{\mu(y_{\boldsymbol j})}\delta_{\boldsymbol j} W,
\]
where
\begin{align*}
\delta_{\boldsymbol j} W:=\int_{I_{\boldsymbol j}}dW(y)\overset{d}{=}\sqrt{|I_{\boldsymbol j}|}\xi_{\boldsymbol j}.
\end{align*}
Here, the notation $A\overset{d}{=}B$ means that $A$ and $B$ have the same distribution, $\{\xi_{\boldsymbol j}\}_{\boldsymbol j\in\mathcal T_D}$ is a set of independent identically distributed normal random variables, $I_{\boldsymbol j}=[j_1\delta y,(j_1+1)\delta y]\times[j_2\delta y,(j_2+1)\delta y]$ is a square with side length $\delta y$, and $|I_{\boldsymbol j}|$ is the area of $I_{\boldsymbol j}$.

\subsection{The numerical method}

According to \eqref{eq:data2}, we define the measurement
\[
\mathcal M(x,k)=64k^4\mathbb E\left[(\Re[u(x;k)])^2-(\Im[u(x;k)])^2\right],\quad x\in U.
\]
Then its evaluation at the discrete points $\{x_{\boldsymbol i}\}_{\boldsymbol i\in\mathcal T_U}$ can be approximated by
\[
\mathcal M(x_{\boldsymbol i},k)\approx\sum_{\boldsymbol j\in\mathcal T_D}|I_{\boldsymbol j}|G(x_{\boldsymbol i}-y_{\boldsymbol j})\mu(y_{\boldsymbol j}).
\]
In the numerical experiments, the measurement is taken as 
\[
\mathcal M_{\rm num}(x_{\boldsymbol i},k):=64k^4\frac1P\sum_{\omega=1}^P\left[(\Re[u_{\rm num}(x_{\boldsymbol i},\omega,k)]^2-(\Im[u_{\rm num}(x_{\boldsymbol i},\omega,k)]^2)\right],
\]
where $P=1000$ denotes the number of sample paths used to approximate the expectation involved in $\mathcal M(x_{\boldsymbol i},k)$. 
Then the strength $\mu$ at the discrete points $\{y_{\boldsymbol j}\}_{\boldsymbol j\in\mathcal T_D}$ can be numerically recovered through the formula
\begin{align}\label{eq:num}
\mathcal M_{\rm num}(x_{\boldsymbol i},k)=\sum_{\boldsymbol j\in\mathcal T_D}|I_{\boldsymbol j}|G(x_{\boldsymbol i}-y_{\boldsymbol j})\mu(y_{\boldsymbol j}).
\end{align}

As mentioned in Section \ref{sec:5.1}, a regularization technique is required to overcome the instability of the inverse problem. Next we introduce the regularized Kaczmarz method, which is a regularized iterative method with two loops.
To enhance the stability and get more accurate reconstructions, we choose $N=4$ measurement domains 
\begin{align*}
U_1&=[1.5,2.5]\times[1.5,2.5],\\
U_2&=[1.5,2.5]\times[-2.5,-1.5],\\
U_3&=[-2.5,-1.5]\times[-2.5,-1.5],\\
U_4&=[-2.5,-1.5]\times[1.5,2.5].
\end{align*}
For each domain $U_n$, $n=1,\cdots,N$, according to \eqref{eq:num}, we get a linear system in the form
\[
b_n=A_nq,\quad n=1,\cdots,N,
\]
where $b_n$ is the discrete measurement vector with components $\mathcal M_{\rm num}(x_{\boldsymbol i},k)$ for $x_{\boldsymbol i}\in U_n$,  $A_n$ is the matrix generated by $G(x_{\boldsymbol i}-y_{\boldsymbol j})$ and $q$ is the unknown vector consisting of $\mu(y_{\boldsymbol j})$. The inner loop of the Kaczmarz algorithm is formed by taking iterations with respect to the index $n$. 
 The outer loop with respect to the index $l=1,\cdots,L$ is used to ensure the convergence of the method as $L\to\infty$ (cf. \cite{BCL17}).
Given an initial guess $q^0=0$, for each $l\in\mathbb N_+$, the regularized Kaczmarz algorithm reads 
\begin{align*}
\begin{cases}
q_0=q^l,\\
q_n=q_{n-1}+A_n^\top(\gamma I+A_nA_n^\top)^{-1}(b_n-A_nq_{n-1}),\quad n=1,\cdots,N,\\
q^{l+1}=q_N,
\end{cases}
\end{align*}
where $\gamma>0$ is the regularization parameter.

\subsection{The numerical examples}

We present two numerical examples to illustrate the validity and effectiveness of the proposed method. 

\begin{figure}[h]
\centering
\subfigure[]{
\includegraphics[width=0.45\textwidth]{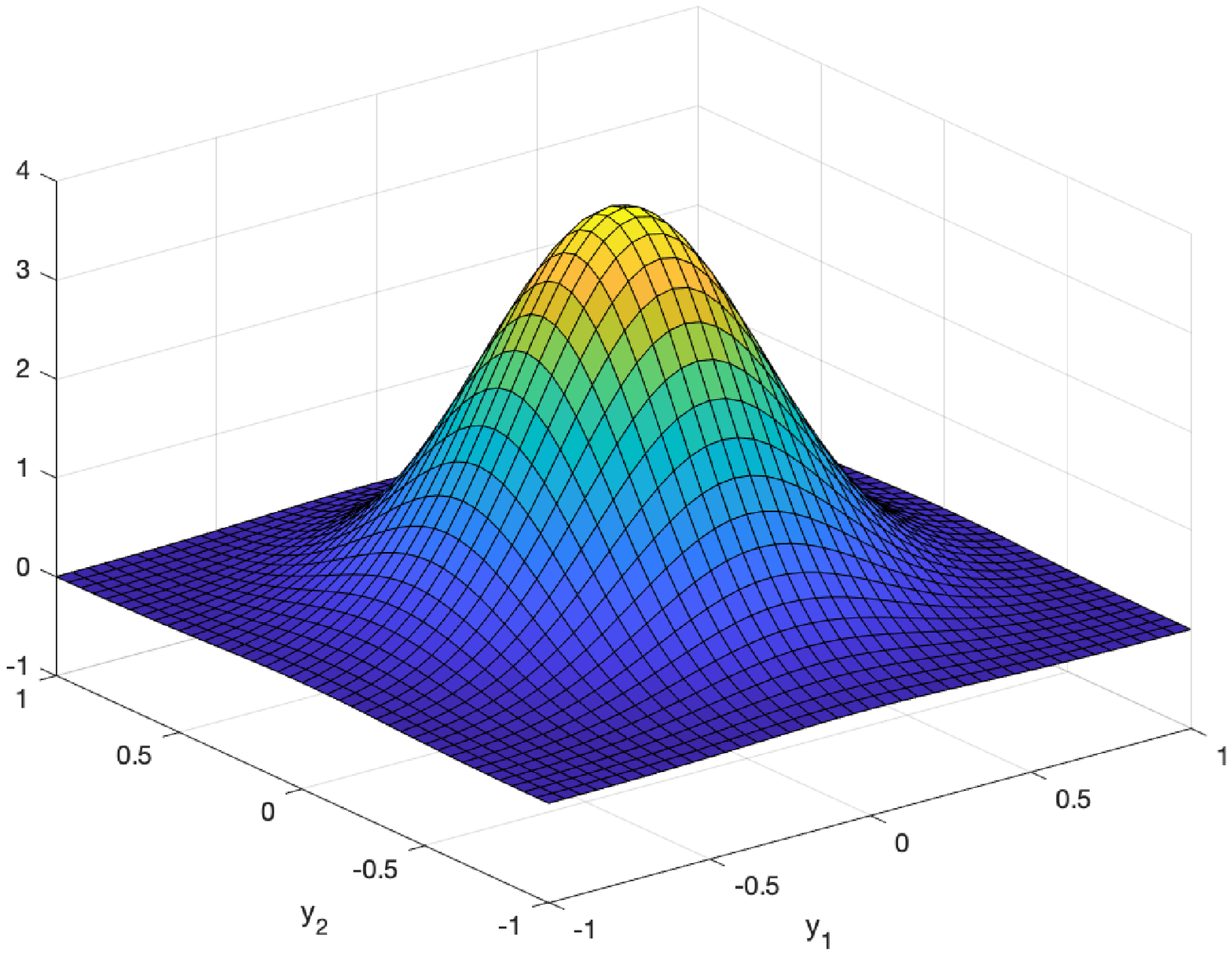}
}
\subfigure[]{
\includegraphics[width=0.45\textwidth]{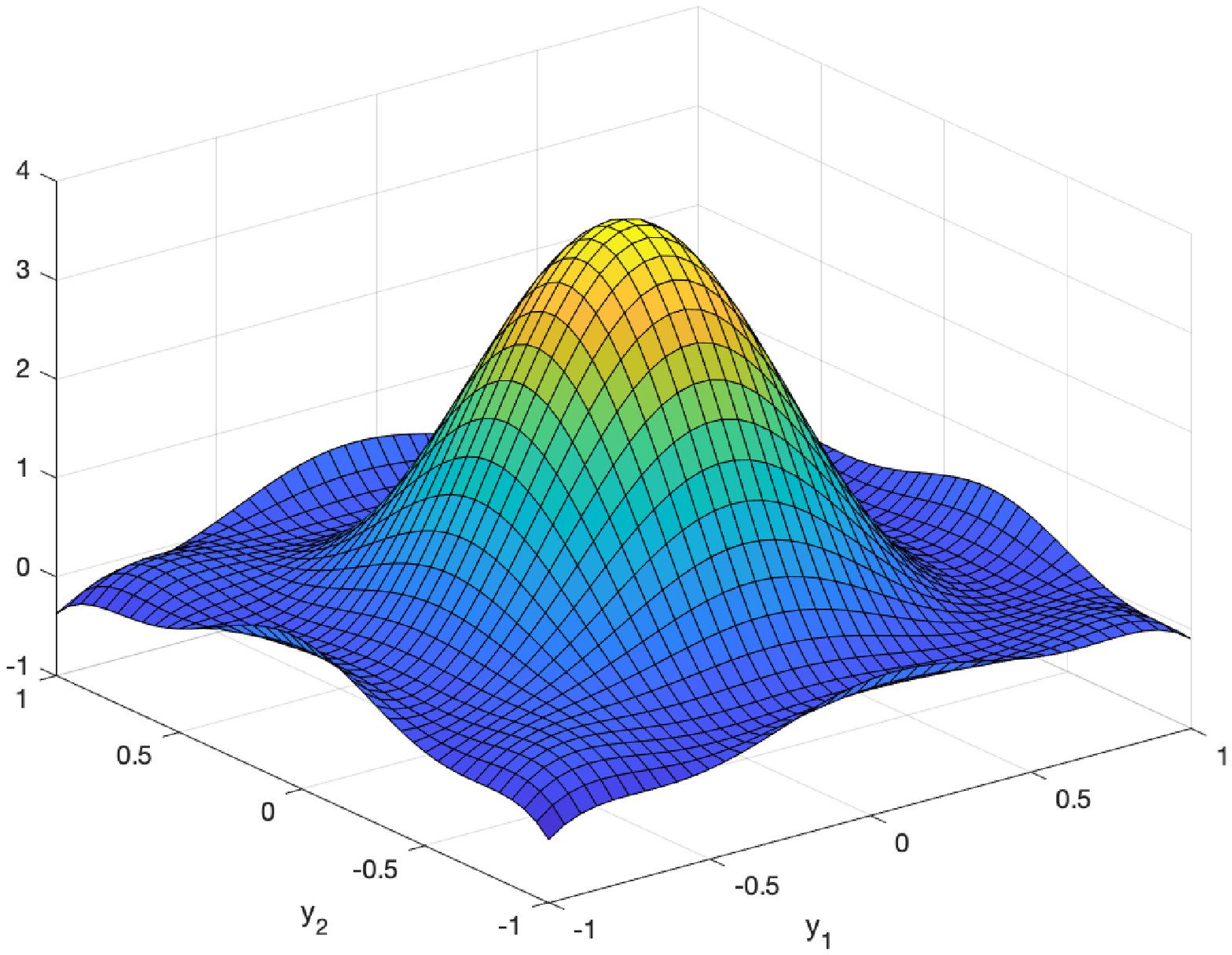}
}
\caption{Example 1: (a) the exact strength; (b) the reconstructed strength at a single frequency $k=2$. }
\label{fig1}
\end{figure}

Example 1: Reconstruct the strength function given by
\[
\mu(y_1,y_2)=4e^{-4(y_1^2+y_2^2)},\quad y=(y_1,y_2)^\top\in D.
\]
The exact strength $\mu$ is plotted in Figure \ref{fig1}(a). For the reconstruction, we choose the iteration number of the outer loop $L=6$ and the regularization parameter $\gamma=10^{-7}$. Figure \ref{fig1}(b) plots the reconstructed strength by using a single frequency $k=2$. It is clear to see that the bump of the exact strength is well reconstructed by using data with only one frequency. The reason is that the strength function only contains few low frequency Fourier modes and all the high frequency Fourier modes decay exponentially fast. 

Example 2: Reconstruct the strength function give by 
\[
\mu(y_1,y_2)=\tilde\mu(3y_1,3y_2), 
\]
where
\[
\tilde\mu(y_1,y_2)=0.3(1-y_1)^2e^{-y_1^2-(y_2+1)^2}-(0.2y_1-y_1^3-y_2^5)e^{-y_1^2-y_2^2}-0.03e^{-(y_1+1)^2-y_2^2}.
\]
The exact strength $\mu$ is plotted in Figure \ref{fig2}(a). This example is harder since it contains a few more Fourier modes than Example 1. It is expected that the multi-frequency data is needed to reconstruct the strength. To incorporate the data with multiple frequencies, one more outer loop is added to the Kaczmarz algorithm and this loop is taken with respect to the wavenumber $k$. We choose the iteration number of the intermediate loop $L=6$ and the regularization parameter $\gamma=10^{-5}$. As a comparison, Figure \ref{fig2}(b) shows the reconstruction at a single frequency $k=2$. Clearly, it is insufficient to reconstruct all the details of the true strength by using only a single low frequency data. Figure \ref{fig2}(c) plots the reconstruction by using multi-frequency data at $k=1:3$. The improvement of the reconstruction is obvious and some details of the true strength are already recovered. Figure \ref{fig2}(d) shows the reconstruction by using multi-frequency data at $k=1:5$. As a few more high frequency data is used, it can be seen that almost all the details of the exact strength are recovered.  

\begin{figure}[h]
  \centering
  \subfigure[]{
  \includegraphics[width=0.45\textwidth]{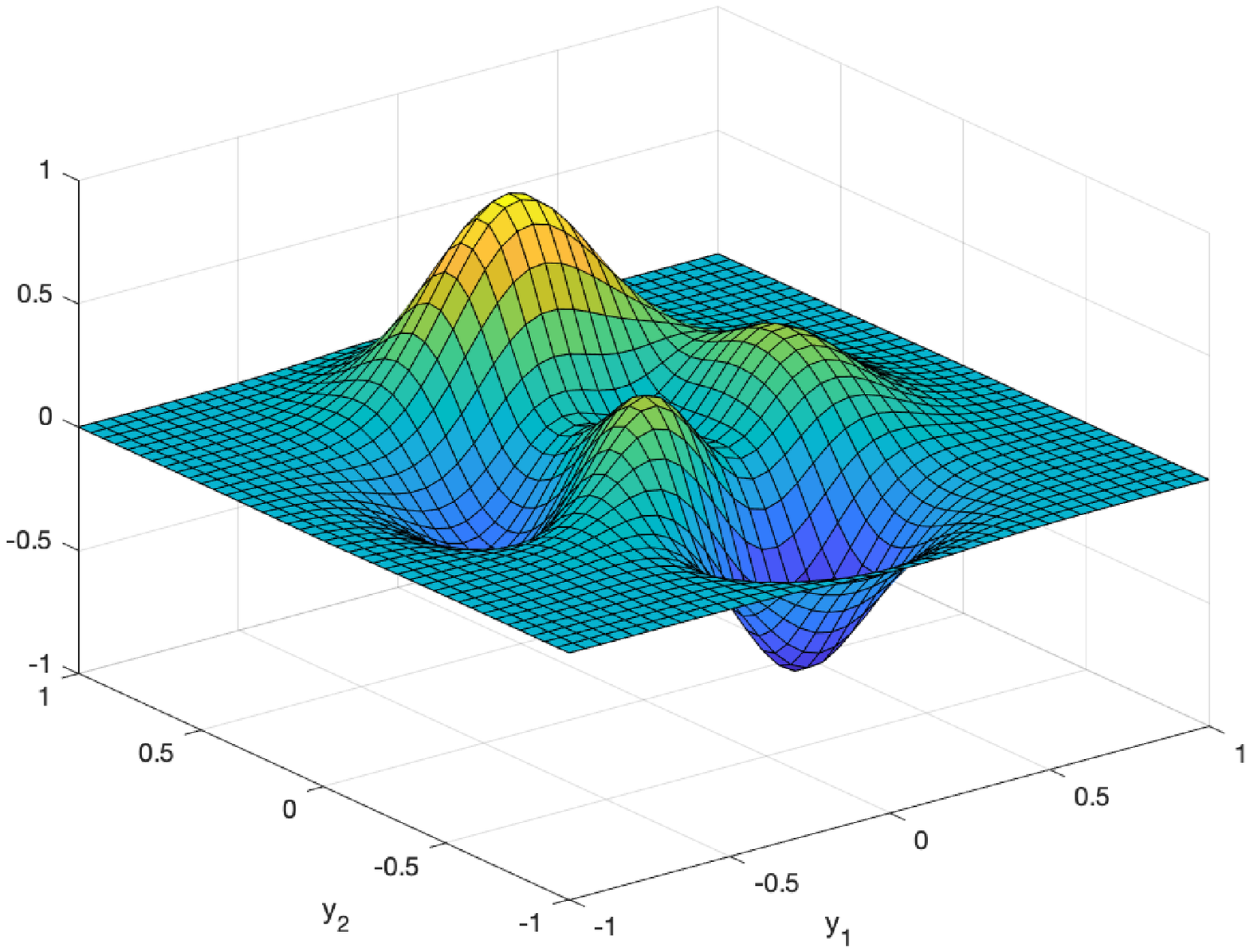}}
  \subfigure[]{
  \includegraphics[width=0.45\textwidth]{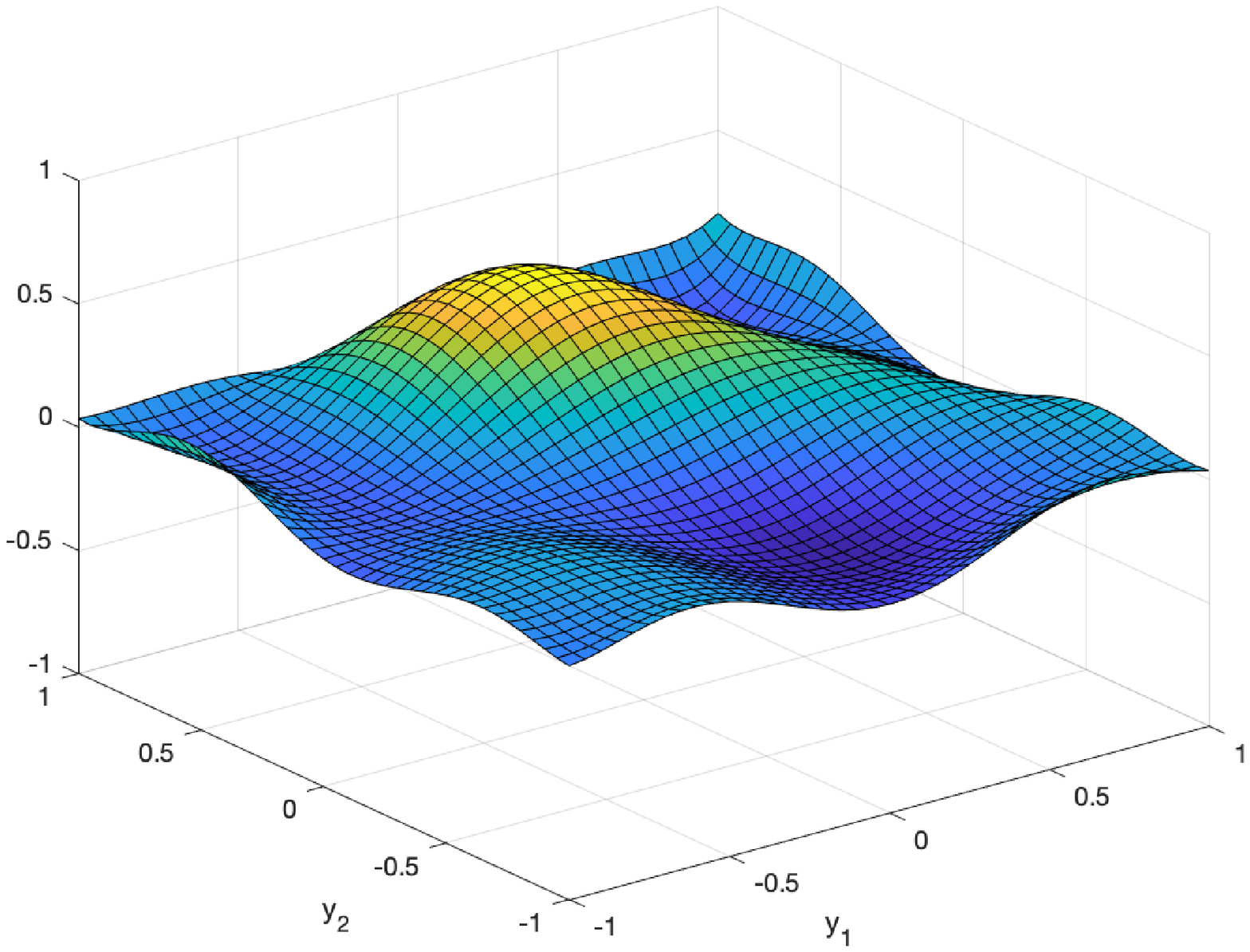}}
  \subfigure[]{
  \includegraphics[width=0.45\textwidth]{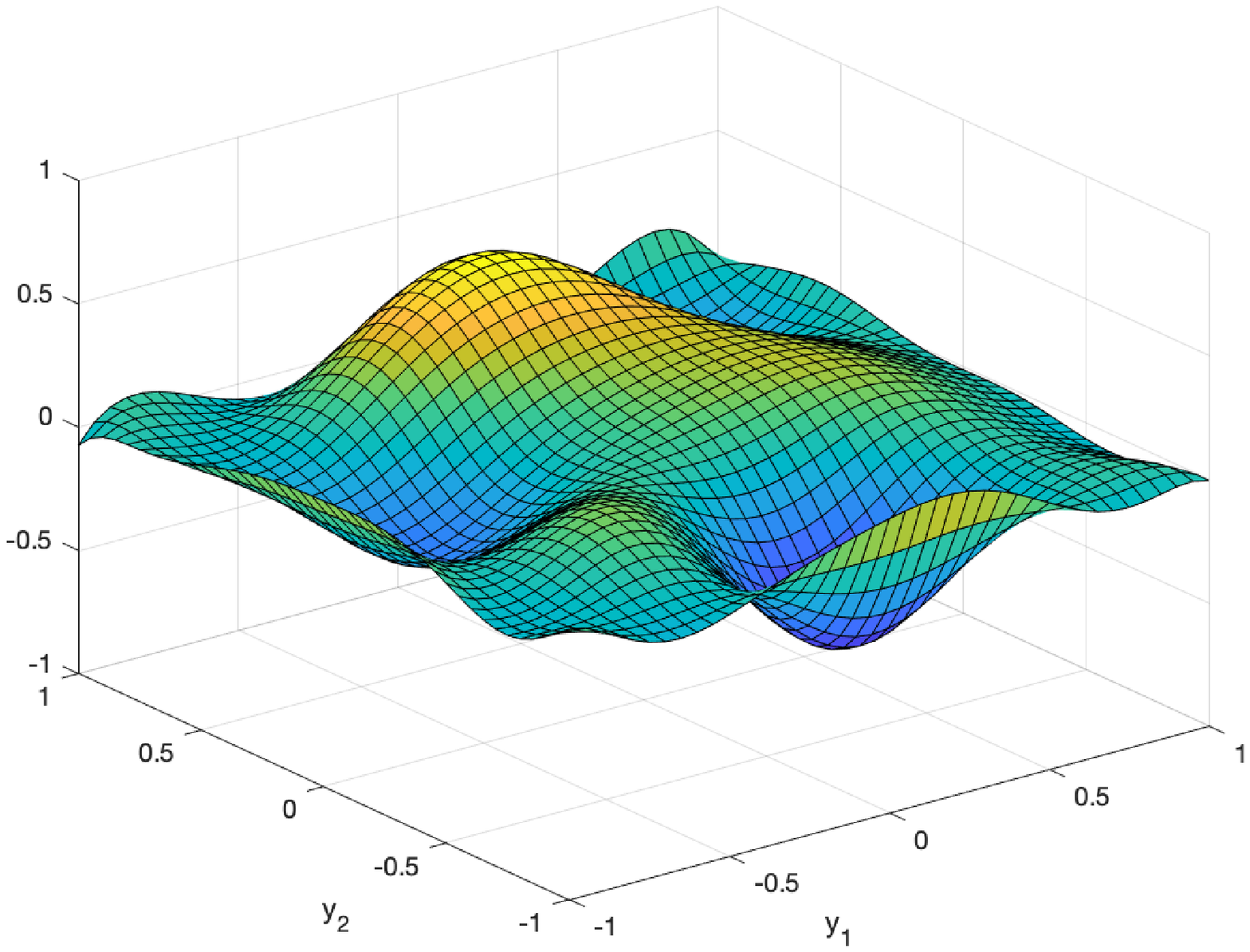}}
  \subfigure[]{
  \includegraphics[width=0.45\textwidth]{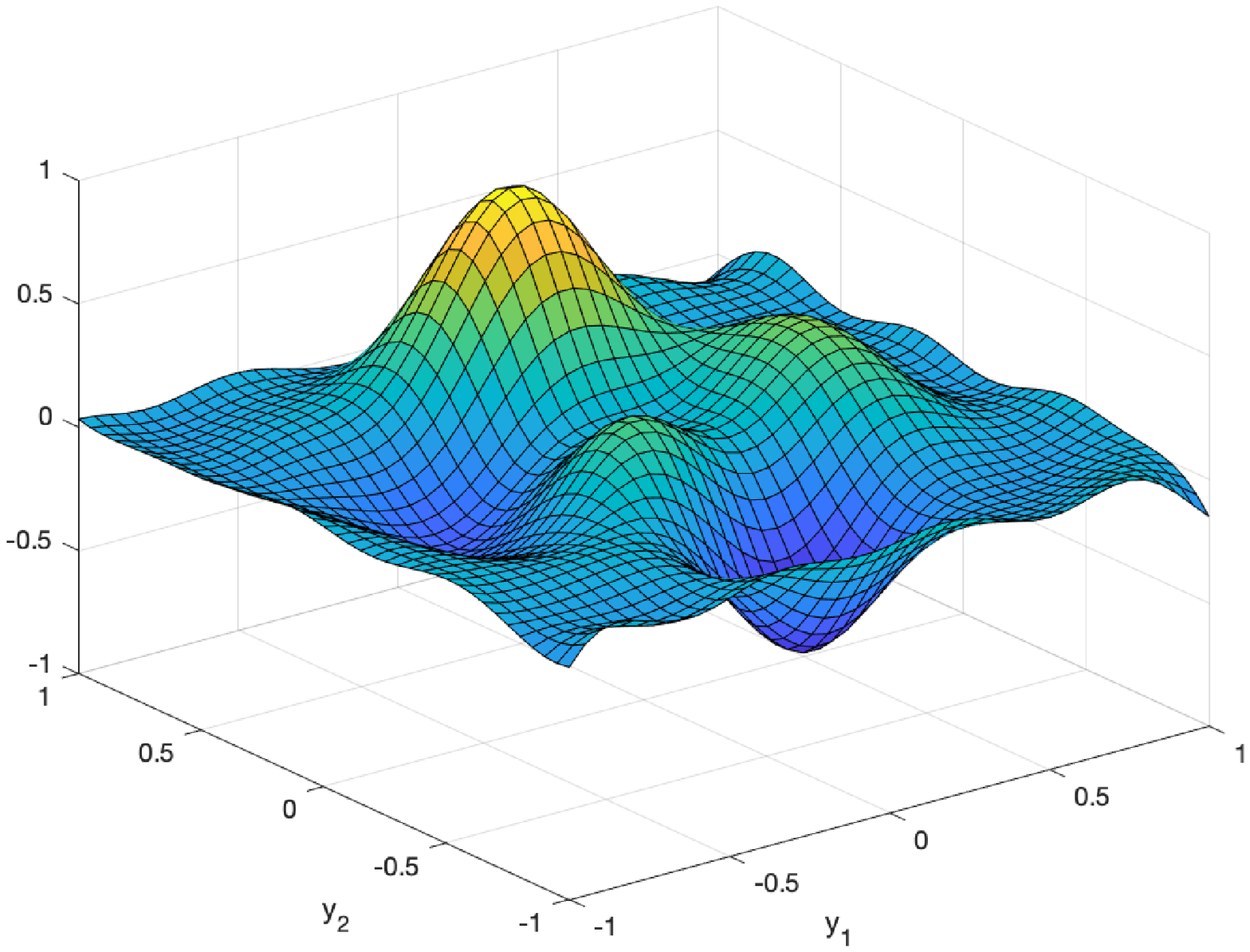}}
   \caption{Example 2: (a) the exact strength; (b) the reconstructed strength at a single frequency $k=2$; (c) the reconstructed strength by using multiple frequencies $k=1:3$; (d) the reconstructed strength by using multiple frequencies $k=1:5$. }\label{fig2}
\end{figure}

\section{Conclusion}\label{co}

We have studied the direct and inverse problems for the stochastic biharmonic operator wave equation driven by a microlocally isotropic Gaussian random source whose covariance operator is a classical pseudo-differential operator. Since the source is too rough to exist pointwisely, it can only be interpreted as a distribution. The well-posedness of the direct problem is obtained in the distribution sense for such a rough source. For the inverse problem, we show that a single realization of the magnitude of the wave field averaged over the frequency band is enough to uniquely determine the strength of the random source. Numerical experiments are presented for the white noise model to demonstrate the effectiveness of the proposed method. 

Since the inverse source problem is linear, one can get an explicit integral expression of the wave field by using the fundamental solution, which is essential in getting the reconstruction formula. If the medium or potential function is a random field, the framework used in the present work is not applicable anymore since the inverse random potential or medium problem is nonlinear. We refer to \cite{KLG12,Y14,TH17,TS18} for related inverse potential problems for the deterministic equations with 
the biharmonic operator. It is open for the inverse random potential or medium problem of the biharmonic wave equation. We hope to be able to report the progress on these problems elsewhere in the future.

\end{document}